\newcommand{\Complex}{\mathbb{C}}
\DeclareMathOperator{\Cond}{cond}
\DeclarePairedDelimiter{\RoundBrackets}{(}{)}
\DeclarePairedDelimiter{\CurlyBrackets}{\{}{\}}
\DeclarePairedDelimiter{\SquareBrackets}{[}{]}
\DeclarePairedDelimiter{\ceil}{\lceil}{\rceil}
\newcommand{\mathdefault}[1][]{}
\newtcolorbox{justabox}[2][]{%
  enhanced,
  attach boxed title to top center={yshift=-3mm,yshifttext=-1mm},
  colframe=blue!75!black,
  colbacktitle=red!80!black,
  fonttitle=\bfseries,
  title=#2,#1
}
\newtheorem{theorem}{Theorem}[section]
\newtheorem{proposition}[theorem]{Proposition}
\theoremstyle{definition}
\newtheorem{definition}{Definition}
\crefname{assumption}{assumption}{assumptions}
\Crefname{assumption}{Assumption}{Assumptions}
\crefname{problem}{problem}{problems}
\Crefname{problem}{Problem}{Problems}
\crefname{equation}{}{}
\Crefname{equation}{}{}
\theoremstyle{remark}
\title{MatExPre: A matrix exponential preconditioner for the high-frequency Helmholtz equation}
\author[1]{Shubin Fu}
\author[1]{Qing Huo Liu}
\author[2]{Qiwei Zhan}
\author[3]{Eric T. Chung}
\author[3]{Changqing Ye\thanks{\href{mailto:changqingye@cuhk.edu.hk}{changqingye@cuhk.edu.hk}}}
\affil[1]{Eastern Institute of Technology, Ningbo, Zhejiang, 315200, China.}
\affil[2]{College of Information Science and Electronic Engineering, Zhejiang University, Hangzhou, Zhejiang, China.}
\affil[3]{Department of Mathematics, The Chinese University of Hong Kong, Shatin, Hong~Kong~SAR, China.}
\begin{document}
\maketitle
\begin{abstract}
  In this article, we present a new preconditioner, MatExPre, for the high-frequency Helmholtz equation by leveraging the properties of matrix exponentials.
Our approach begins by reformulating the Helmholtz equation into a Schr\"{o}dinger-like equation and constructing a time-domain solver based on a fixed-point iteration.
We then establish a rigorous connection between the time-domain solver and matrix exponential integrators, which enables us to derive algebraic preconditioners that rely solely on sparse matrix-vector products.
Spectral analysis and a detailed numerical implementation strategy, including performance improvements achieved through complex shifting, are discussed.
Finally, numerical experiments on 2D and large-scale 3D homogeneous and inhomogeneous models, including benchmark seismic examples, substantiate the effectiveness and scalability of the proposed methods.

  \textbf{Keywords:} Helmholtz equation, matrix exponential, preconditioner, time-domain solver, spectral analysis
\end{abstract}

\section{Introduction}
\label{sec:introduction}
Waves serve as ubiquitous carriers of energy and information, thereby enabling the probing internal or distant structures that would otherwise remain inaccessible, such as the human body, the Earth's mantle, or even the universe.
In many applications, the temporal dimension can be neglected, resulting in a time-harmonic regime.
In this article, we consider the Helmholtz equation---which represents the time-harmonic formulation of the scalar wave equation---as follows:
\begin{equation}
  \label{eq:helmholtz}
  \frac{\omega^2}{c^2}u + \Delta u = -f,
\end{equation}
where $\omega$ denotes the constant angular frequency,  $c$ represents the spatially varying wave speed, $\Delta$ is the Laplace operator, and $f$ is the harmonic source term.

To obtain a numerical solution to the Helmholtz equation that merits both accuracy and efficiency, we should devise a thorough treatment on formulations, discretizations, and solvers.
In practice, numerical schemes are based on formulating the problem on a truncated bounded domain, where the radiation condition---for unbounded domain---is mimicked by applying artificial boundary conditions (e.g., absorbing boundary conditions \cite{Engquist1979}), artificial media (e.g., perfectly matched layers (PML) \cite{Berenger1994,Chew1996}), or artificial potentials (e.g., complex absorbing potentials \cite{Manolopoulos2002}).
General numerical discretizations of the Helmholtz equation can be plagued by the so-called ``pollution effect'' \cite{Babuska1997,Ihlenburg1997,Melenk2011}, which refers to the phenomenon where the accuracy of the numerical solution deviates from the expected approximation rate as the frequency increases.
Significant endeavors have been devoted to understanding this phenomenon by establishing wavenumber explicit numerical error estimates, see, on boundary element methods \cite{Galkowski2023} and on discontinuous Galerkin methods \cite{Feng2009}.

In order to resolve the wave structure accurately, the number of degrees of freedom (DoFs) must scale with at least $\bigO(\omega^d)$, where $d$ denotes the spatial dimension.
At high frequencies, this scaling yields an enormous linear system for which direct solvers are impractical, leaving iterative solvers equipped with preconditioners as the only viable option.
Studying preconditioners for Helmholtz problems is an active area of research, with a plethora of methods proposed in the literature.
We here introduce a few representative examples.
Conventional Schwarz (also referred to as domain decomposition) methods with Dirichlet transmission conditions cannot be directly applied to the Helmholtz equation.
Significant improvements have been achieved by employing Robin transmission conditions \cite{Despres1991}, and further enhancements are obtained by utilizing optimized transmission conditions derived from the spectral analysis of the Helmholtz operator \cite{Gander2002,Gander2022}.
Sweeping preconditioners, which are based on the idea of sequentially sweeping through the domain layer by layer and compressing the intermediate dense matrices, have been proposed in \cite{Engquist2011} and exhibit robustness for high-frequency problems.
Despite further developments \cite{Poulson2013}, the intrinsic sequential nature and heavy setup cost of sweeping preconditioners limit their scalability and portability in parallel computing.
A recent review \cite{Gander2019} provides a unified perspective on optimized Schwarz methods, sweeping preconditioners, and other related methods \cite{Chen2013}.
Another influential approach is the complex shifted Laplace preconditioner \cite{Erlangga2004,Erlangga2007}.
The fundamental idea is to introduce a lower-order complex shift term to the Helmholtz operator to form the preconditioner system, which 
can enhance coercivity.
The complex shifted operator is much easier to handle, as multigrid methods---that perform poorly on the original Helmholtz operator---can be effectively applied to the shifted operator \cite{Erlangga2006}.
The judicious choice of the complex shift is crucial to the performance of the preconditioner \cite{Gander2015}, and modifications to the classic multigrid components have been demonstrated to be effective \cite{Calandra2012}.

Our approach draws inspiration from several recently developed methods.
WaveHoltz \cite{Appeloe2020} was proposed by Appel\"{o}, Garcia, and Runborg to solve the Helmholtz equation by considering its physical essence as a wave propagation problem.
The key idea is to revert to the time-domain wave equation and utilize the limiting amplitude principle to derive a contraction map that involves a time integral over the wave dynamics, where the fixed point of this map yields the solution to the Helmholtz equation.
The primary advantage of employing wave equations lies in their exceptional scalability, particularly when combined with explicit adaptive time-stepping schemes and mass-lumping techniques.
Another notable time-domain solver for the Helmholtz equation is the controllability method \cite{Auchmuty1987,Bristeau1998,Grote2019}; in contrast to WaveHoltz, it identifies the minimizer of an optimal control problem involving wave dynamics as the solution to the Helmholtz equation.
Focusing more on the algebraic aspects, the time-domain preconditioner was developed by Stolk in \cite{Stolk2021}.
Recently, Luo and Liu devised a novel solver by reformulating the Helmholtz equation into a fixed-point problem related to an exponential operator \cite{Luo2022}.
Their approach employs complex absorbing potentials to simulate the Sommerfeld radiation condition, thereby enabling operator-splitting techniques to accelerate the evaluation of the exponential operator.

In this article, we propose the ``MatExPre'' methods, which employ the matrix exponential as a key ingredient in constructing a preconditioner for the Helmholtz equation.
Compared to the aforementioned methods, the novelty of our approach lies in the following aspects:
\begin{itemize}
  \item Instead of wave equations, we derive a time-domain solver by recasting the Helmholtz equation into a Schr\"{o}dinger-like equation.
        The Schr\"{o}dinger equation involves only first-order time derivatives, which permits greater flexibility in temporal discretization.

  \item We establish the connection between the time-domain solver and the matrix exponential formulation, thereby providing a more modular and algebraic framework for constructing the solver.
        As evidence, the PML method can now be incorporated into our framework, which was previously challenging to handle in \cite{Luo2022}.

  \item We introduce a complex shifted term into the preconditioner system to enhance the convergence of the iterative solver.
        This technique is feasible only within the preconditioner framework, as the preconditioner system approximates, rather than exactly replicates, the original Helmholtz system.
\end{itemize}
Ideally, the proposed method requires only the efficient implementation of sparse matrix-vector products (SpMV), while no matrix factorizations or heavy sequential routines are needed, yielding significant potential for parallel computing on new architectures.

The remainder of this article is organized as follows.
In \cref{sec:derivation}, we derive the time-domain solver for the Helmholtz equation from the Schr\"{o}dinger equation and prove the convergence of the fixed-point iteration to the solution of the Helmholtz equation.
In \cref{sec:main}, we reveal the connection between the time-domain solver and the matrix exponential formulation, perform a spectral analysis of the preconditioner, and detail its numerical implementation strategy.
Numerical experiments are presented in \cref{sec:numerical-experiments}, including a series of tests to guide the selection of parameters and performance evaluation on 3D geological models.
Finally, we conclude the paper in \cref{sec:conclusion}.

\section{From the Schr\"{o}dinger equation}
\label{sec:derivation}
The Helmholtz equation is the time-harmonic form of the acoustic wave equation.
Specifically, given a time-harmonic source term $g\exp(-\i \omega t)$, the solution $\tilde{u}(x,t)$ to the wave equation\footnote{For simplicity, the initial and boundary conditions are omitted.}
\begin{equation}
  \label{eq:wave}
  \frac{\partial^2\tilde{u}}{\partial t^2} - c^2\Delta \tilde{u} = g\exp(-\i \omega t)
\end{equation}
should formally satisfy
\[
  \tilde{u}(x, t) \exp(\i \omega t) \rightarrow u(x) \text{ as } t\rightarrow \infty,
\]
where $g(x)\coloneqq c^2(x)f(x)$ and $u$ solves the standard Helmholtz equation \cref{eq:helmholtz}.
This statement, known as the \emph{limiting amplitude principle}, has been rigorously justified in recent work by \cite{Arnold2024}, which establishes convergence rates and provides a historical overview of the topic.

\subsection{A time-domain Helmholtz solver}
To rigorously derive our method while preserving key insights without unnecessary complexity, we consider the following Helmholtz equation with a complex absorbing potential as the model problem:
\begin{equation}
  \label{eq:helmholtz-absorbing}
  \left\{
  \begin{aligned}
     & \omega^2u + \mathcal{H} u = -g &  & \text{ in } \Omega^{\#} \text{ with }\mathcal{H}u \coloneqq c^2(x)\Delta u + \i W(x) u, \\
     & u = 0                          &  & \text{ on } \partial\Omega^{\#}.
  \end{aligned}
  \right.
\end{equation}
Here, $\Omega^\#$ denotes an enlarged domain that contains the domain $\Omega$ and $W(x)$ is a non-negative smooth function vanishing in $\Omega$.
The effect of the complex potential $\i W(x)$ is absorbing the outgoing waves, which is a technique well-established in quantum scattering calculations \cite{Manolopoulos2002}.
We can observe that the frequency-domain formulation \cref{eq:helmholtz} derives from the time-domain wave equation \cref{eq:wave} through the standard substitution $\partial_t \rightarrow -\i \omega$.
Interestingly, instead of the correspondence $\omega^2 u \sim -\partial_{tt}\tilde{u}$, we can also retain one $\omega$-factor with the time derivative, leading to $\omega^2 u \sim \i\omega\partial_{t}\tilde{u}$.
This yields an alternative time-domain form of the Helmholtz equation as follows:
\begin{equation}
  \label{eq:Sch-form}
  \left\{
  \begin{aligned}
     & \i\omega \partial_t \tilde{u} + \mathcal{H} \tilde{u} = -g\exp(-\i \omega t) &  & \forall (x, t) \in \Omega^\# \times (0, \infty),           \\
     & \tilde{u}(x, t) = 0                                                          &  & \forall (x, t) \in \partial\Omega^{\#} \times (0, \infty), \\
     & \tilde{u}(x, 0)=u_0(x)                                                       &  & \forall x \in \Omega^{\#}.                                 \\
  \end{aligned}
  \right.
\end{equation}
We can recognize that the above equation is a Schr\"{o}dinger-like equation with a non-Hermitian operator $v\mapsto \i W v$ and inhomogeneous source term $g\exp(-\i \omega t)$.

With a slight abuse of notation, we also denote $\tilde{u}(t)$ as $\tilde{u}(\cdot, t)$.
Since the operator $\mathcal{H}$ is time-independent, we can simplify \cref{eq:Sch-form} through the temporal scaling $t\mapsto \omega t$, yielding $\partial_t \tilde{u} - \i \mathcal{H}\tilde{u} = \i g \exp(-\i \omega^2 t)$.
The variation-of-constants formula \cite{Hochbruck2010} then gives the solution to the time-scaled version of \cref{eq:Sch-form} as:
\begin{equation}
  \label{eq:Sch-form-sol}
  \tilde{u}(t) = \exp(\i t \mathcal{H}) u_0 + \i \exp(-\i \omega^2 t) \int_0^t \exp(\i \omega^2 \tau) \exp(\i \tau \mathcal{H}) g \di \tau.
\end{equation}
It is natural to raise the question on whether the limiting amplitude principle holds for this time-domain formulation.

One elementary approach is to examine the spectral decomposition of the operator $\mathcal{H}$, which can provide explicit expressions to $u$ in \cref{eq:helmholtz-absorbing} and $\tilde{u}$ in \cref{eq:Sch-form-sol}.
However, a key challenge arises from the infinite-dimensional setting and the fact that $\mathcal{H}$ is not self-adjoint, making its spectral decomposition mathematically non-rigorous.
While a more sophisticated framework, such as the one presented in \cite{Arnold2024}, could address these issues, such an in-depth treatment falls outside the scope of this paper, as our focus is primarily on computational aspects.
Instead, we conduct a formal spectral analysis to derive useful insights, while deferring rigorous mathematical justification.
Let $\Phi_k$ be an eigenfunction of $\mathcal{H}$ corresponding to the eigenvalue $\lambda_k$, such that $\mathcal{H}\Phi_k = \lambda_k \Phi_k$, where $\lambda_k$ is complex-valued.
Assuming $g$ and $u_0$ admit spectral decompositions as $g = \sum_k g_k \Phi_k$ and $u_0 = \sum_k u_{0,k} \Phi_k$.
Then, the solution $u$ of \cref{eq:helmholtz-absorbing} can be expressed as
\[
  u = \sum_k u_k \Phi_k \text{ with } u_k = -\frac{g_k}{\lambda_k + \omega^2}.
\]
Similarly, from \cref{eq:Sch-form-sol}, the solution $\tilde{u}$ expands as
\begin{equation}
  \label{eq:Sch-form-sol-exp}
  \begin{aligned}
     & \tilde{u}(t) = \sum_k \tilde{u}_k(t) \Phi_k                                                                                                                                                      \\
     & \qquad \text{ with } \tilde{u}_k(t) = \exp(-\i \omega^2 t) \SquareBrackets*{\exp(\i t (\lambda_k + \omega^2)) u_{0,k} -  \frac{1 - \exp(\i t (\lambda_k + \omega^2))}{\lambda_k + \omega^2}g_k}.
  \end{aligned}
\end{equation}
We can immediately observe that as $t \rightarrow \infty$, $\tilde{u}(t)\exp(\i \omega^2 t) \rightarrow  u$ can hold once $\exp(\i t (\lambda_k+\omega^2)) \rightarrow 0$.
This condition is satisfied if the imaginary part of $\lambda_k$, denoted as $\Im(\lambda_k)$, is strictly positive.

Proving that eigenvalues satisfy specific range conditions is generally highly non-trivial.
However, due to the introduction of the complex potential, we can prove that the imaginary part of the eigenvalue $\lambda$ is non-negative, stated as the following proposition.
\begin{proposition}
  Suppose that $c \in C^\infty(\Omega^\#) \cap L^\infty(\Omega^\#)$ and is bounded below from zero.
  Let $W \in L^\infty(\Omega^\#)$ with $W \geq 0$ a.e. in $\Omega^\#$.
  If $\Phi \in H^1_0(\Omega^\#) \cap H^2(\Omega^\#)$ satisfies $\mathcal{H}\Phi=\lambda \Phi$ in the $L^2(\Omega^\#)$ sense, then $\Im(\lambda) \geq 0$.
\end{proposition}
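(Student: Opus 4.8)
The plan is to test the eigenvalue equation $\mathcal{H}\Phi=\lambda\Phi$ against $\Phi$ in a suitably weighted $L^2(\Omega^\#)$ inner product and then read off the sign of $\Im(\lambda)$ from the imaginary part of the resulting identity. The naive attempt---pairing $c^2\Delta\Phi+\i W\Phi=\lambda\Phi$ with $\bar\Phi$ in the \emph{un}weighted $L^2(\Omega^\#)$ inner product---does not quite work, because the principal part $v\mapsto c^2(x)\Delta v$ is not symmetric: integrating by parts produces a stray term $\int_{\Omega^\#}\bar\Phi\,\nabla(c^2)\cdot\nabla\Phi\,\di x$ whose imaginary part need not vanish. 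The remedy is to divide the equation by $c^2$ first, i.e.\ to work in $L^2(\Omega^\#; c^{-2}\di x)$, which restores symmetry of the second-order part.

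Concretely, first I would divide $\mathcal{H}\Phi=\lambda\Phi$ by $c^2$ (legitimate in $L^2$ since $c$ is bounded below away from zero, so $c^{-2}\in L^\infty$ and the scaled terms stay in $L^2$), multiply by $\bar\Phi$, and integrate over $\Omega^\#$, obtaining
\[
  \int_{\Omega^\#}\Delta\Phi\,\bar\Phi\,\di x \;+\; \i\int_{\Omega^\#}\frac{W}{c^2}\lvert\Phi\rvert^2\,\di x \;=\; \lambda\int_{\Omega^\#}\frac{1}{c^2}\lvert\Phi\rvert^2\,\di x .
\]
All three integrals are finite: $W/c^2,\,1/c^2\in L^\infty(\Omega^\#)$ and $\Phi\in L^2$, while $\Delta\Phi\in L^2$ because $\Phi\in H^2$. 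Next, since $\Phi\in H^1_0(\Omega^\#)\cap H^2(\Omega^\#)$, Green's first identity applies with no boundary contribution, so $\int_{\Omega^\#}\Delta\Phi\,\bar\Phi\,\di x=-\int_{\Omega^\#}\lvert\nabla\Phi\rvert^2\,\di x\in\Real$. Taking imaginary parts of the displayed identity then kills the first (real) term and leaves
\[
  \int_{\Omega^\#}\frac{W}{c^2}\lvert\Phi\rvert^2\,\di x \;=\; \Im(\lambda)\int_{\Omega^\#}\frac{1}{c^2}\lvert\Phi\rvert^2\,\di x .
\]
Because $\Phi$ is an eigenfunction it is not identically zero, and $c^{-2}$ is bounded below by a positive constant, so the integral on the right is strictly positive; dividing through gives $\Im(\lambda)=\bigl(\int_{\Omega^\#}Wc^{-2}\lvert\Phi\rvert^2\bigr)\big/\bigl(\int_{\Omega^\#}c^{-2}\lvert\Phi\rvert^2\bigr)\geq 0$ since $W\geq 0$ a.e., which is the claim.

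The only genuine obstacle is recognizing that the $c^{-2}$ weight is needed; once the principal part has been symmetrized the rest is a one-line computation. A minor point worth stating carefully is the justification of the integration by parts---here it is immediate from the $H^1_0\cap H^2$ regularity and the boundedness of $\Omega^\#$, but one could equally phrase $\mathcal{H}\Phi=\lambda\Phi$ weakly, test with $v\in H^1_0(\Omega^\#)$, and specialize $v=\Phi$. It is also worth noting in passing that, since $\Omega^\#$ is bounded and $c$ is bounded above and below, the weighted norm $\|\cdot\|_{L^2(c^{-2}\di x)}$ is equivalent to the standard $L^2(\Omega^\#)$ norm, so nothing is lost by changing to the weighted pairing.
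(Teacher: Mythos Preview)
Your proposal is correct and follows essentially the same approach as the paper: both test the eigenvalue equation against $\bar\Phi/c^{2}$ (equivalently, divide by $c^{2}$ and pair with $\bar\Phi$), integrate by parts using $\Phi\in H^1_0\cap H^2$, and read off $\Im(\lambda)$ as the ratio $\int_{\Omega^\#}Wc^{-2}|\Phi|^{2}\,/\int_{\Omega^\#}c^{-2}|\Phi|^{2}\ge 0$. Your write-up is in fact more careful than the paper's about justifying the integrability of each term and the absence of boundary contributions.
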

\begin{proof}
  Taking $\Phi / c^2$ as a test function to $\mathcal{H}\Phi=\lambda \Phi$, we have
  \[
    \lambda \int_{\Omega^\#} \abs{\Phi}^2/ c^2 \di x = \int_{\Omega^\#} \overline{\Phi}\Delta \Phi + \i W/c^2 \abs{\Phi}^2 \di x= \int_{\Omega^\#} -\abs{\nabla \Phi}^2 + \i W/c^2 \abs{\Phi}^2 \di x. \\
  \]
  Therefore, it holds that
  \[
    \Im(\lambda) = \frac{\int_{\Omega^\#} W/c^2 \abs{\Phi}^2 \di x}{\int_{\Omega^\#} \abs{\Phi}^2/ c^2 \di x} \geq \frac{\int_{\Omega^\#} W \abs{\Phi}^2 \di x}{\norm{c}_{L^\infty(\Omega^\#)}^2\norm{1/c}_{L^\infty(\Omega^\#)}^2\norm{\Phi}_{L^2(\Omega^\#)}^2} \geq 0.
  \]
\end{proof}

This proposition indicates that the eigenvalues of $\mathcal{H}$ lie in the upper half-plane, which is weaker than the condition $\Im(\lambda) > 0$.
If assuming $W \geq W_{\mathup{min}}>0$---as in the complex shifted technique \cite{Erlangga2006,Erlangga2007}, we can derive an estimate on the imaginary part of the eigenvalue $\lambda$ as
\[
  \Im(\lambda) \geq \frac{W_{\mathup{min}}}{\norm{c}_{L^\infty(\Omega^\#)}^2\norm{1/c}_{L^\infty(\Omega^\#)}^2} > 0.
\]
However, such an assumption modifies the original problem.
There is another fact that can support the strict positivity of $\Im(\lambda)$.
Recall that the Sommerfeld radiation condition defines a non-local operator $\mathcal{T}$ on $\partial \Omega$, leading to a variational form of the Helmholtz equation as follows \cite{Nedelec2001}: find $u \in H^1(\Omega)$ such that
\[
  \int_\Omega \nabla \overline{v} \cdot \nabla u - \omega^2/c^2 \overline{v}u \di x - \int_{\partial \Omega} \overline{v} \mathcal{T}u \di s = \int_\Omega \overline{v} f \di x, \quad \forall v \in H^1(\Omega).
\]
For a spherical domain, Theorem 2.6.4 in \cite{Nedelec2001} states that $\Im \int_{\partial \Omega} \overline{v} \mathcal{T}v \di s \geq 0$.
Furthermore, from the spherical harmonics expansion, $\Im \int_{\partial \Omega} \overline{v} \mathcal{T}v \di s=0$ if and only $v$ vanishes on the exterior domain to $\Omega$.
By the analytical continuation, this implies that $v=0$, which essentially that $\Im(\lambda)$ can only be positive (see Theorem 2.6.5 in \cite{Nedelec2001}).
From the complex absorbing potential and the non-local operator $\mathcal{T}$, we can conclude that non-negative imaginary parts of eigenvalues seem to be a universal property for characterizing wave propagation in bounded domains.
We introduce notation that $\lambda^*=\min_k \Im(\lambda_k)$ and \emph{conjecture} that the constant $\lambda^*>0$.

We can follow WaveHoltz approaches \cite{Appeloe2020} to derive a time-domain solver for the Helmholtz equation.
The fundamental idea is identifying \emph{fixed-point} solution to the Schr\"{o}dinger propagation is essentially the solution to the Helmholtz equation.
More precisely, for any $v$, we can define the operator $\mathcal{S}_{T,g}v\coloneqq \exp(\i \omega^2 T) \tilde{v}(T)$, where $\tilde{v}(t)$ solves the abstract ordinary differential equation:
\begin{equation}
  \label{eq:Sch-prop}
  \partial_t \tilde{v} - \i \mathcal{H}\tilde{v} = \i g \exp(-\i \omega^2 t) \quad \forall t \in (0, T) \text{ with } \tilde{v}(0)=v.
\end{equation}
We can summarize the above discussion as the following theorem.
\begin{theorem} \label{thm:fixed-point-ver-1}
  Suppose that the source term $g$ admits a finite spectral decomposition as $g=\sum_{k}g_k \Phi_k$ and $\lambda^* > 0$.
  Then, starting from zero, the fixed-point iteration $v\mapsto \mathcal{S}_{T,g}v$ convergences, and the limit solves the Helmholtz equation \cref{eq:helmholtz-absorbing}.
\end{theorem}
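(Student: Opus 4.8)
The plan is to use the hypothesis that the source $g$ has a \emph{finite} spectral decomposition in order to collapse the fixed-point iteration onto a finite-dimensional $\mathcal{H}$-invariant subspace, on which $\mathcal{S}_{T,g}$ is an affine map with a strictly contractive linear part, and then to sum the resulting geometric series explicitly. First I would put $\mathcal{S}_{T,g}$ into closed form: applying the variation-of-constants representation \cref{eq:Sch-form-sol} to \cref{eq:Sch-prop} with initial datum $v$ in place of $u_0$ and multiplying by $\exp(\i\omega^2 T)$ gives
\[
  \mathcal{S}_{T,g}v = \mathcal{A}v + b,\qquad
  \mathcal{A}\coloneqq \exp(\i\omega^2 T)\exp(\i T\mathcal{H}),\quad
  b\coloneqq \i\int_0^T \exp(\i\omega^2\tau)\exp(\i\tau\mathcal{H})g\di\tau .
\]
Starting from $v^{(0)}=0$, the $n$-th iterate is then the partial Neumann sum $v^{(n)}=\sum_{j=0}^{n-1}\mathcal{A}^{j}b$, so the whole question reduces to understanding the powers of $\mathcal{A}$ acting on $b$.

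Next I would exploit the finite spectral hypothesis. Let $F$ be the finite index set in $g=\sum_{k\in F}g_k\Phi_k$ and set $V_F\coloneqq \Span\{\Phi_k:k\in F\}$. Since every $\Phi_k$ is an eigenfunction of $\mathcal{H}$, the subspace $V_F$ is invariant under $\exp(\i t\mathcal{H})$, which acts there by $\Phi_k\mapsto \exp(\i t\lambda_k)\Phi_k$; in particular $b\in V_F$, $\mathcal{A}\Phi_k=\mu_k\Phi_k$ with $\mu_k\coloneqq \exp(\i T(\lambda_k+\omega^2))$, and all iterates $v^{(n)}$ stay in the finite-dimensional space $V_F$. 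Because $\lambda^*>0$, each eigenvalue satisfies $\Im(\lambda_k)\ge\lambda^*>0$ --- in particular $\lambda_k$ is non-real, so $\lambda_k+\omega^2\neq 0$ --- and hence $\abs{\mu_k}=\exp(-T\Im(\lambda_k))\le \exp(-T\lambda^*)<1$. Thus $\mathcal{A}|_{V_F}$ has spectral radius strictly below one, $\mathcal{A}^{n}|_{V_F}\to 0$, the Neumann series converges on $V_F$, and $\mathcal{S}_{T,g}|_{V_F}$ has a unique fixed point to which $v^{(n)}$ converges.

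It remains to identify that limit with the Helmholtz solution. Evaluating the $\tau$-integral componentwise gives $b=\sum_{k\in F}\frac{\mu_k-1}{\lambda_k+\omega^2}g_k\Phi_k$, and summing the geometric series componentwise yields $v^{(n)}=\sum_{k\in F}\frac{\mu_k^{n}-1}{\lambda_k+\omega^2}g_k\Phi_k$, which tends to $-\sum_{k\in F}\frac{g_k}{\lambda_k+\omega^2}\Phi_k$ as $n\to\infty$ --- precisely the expansion of the solution $u$ of \cref{eq:helmholtz-absorbing} recorded just before the theorem. Alternatively one may check directly that $u$ is a fixed point, i.e.\ $\mathcal{A}u+b=u$ (using $g_k/(\lambda_k+\omega^2)=-u_k$), and invoke uniqueness.

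The arithmetic is routine; the one step that genuinely needs care is the reduction to finite dimensions. The formal spectral calculus for the non-self-adjoint operator $\mathcal{H}$ --- in particular writing $\exp(\i t\mathcal{H})\Phi_k=\exp(\i t\lambda_k)\Phi_k$ and using the variation-of-constants formula --- is not rigorous in general, and the hypothesis of a \emph{finite} decomposition of $g$ is exactly what makes it legitimate here, since then the iteration never leaves the finite-dimensional invariant subspace $V_F$ on which these manipulations are unambiguous. A minor wrinkle is that the $\Phi_k$ need not be linearly independent; one simply replaces $F$ by a subset indexing a basis of $V_F$, after which $\mathcal{A}|_{V_F}$ is diagonalizable and every estimate above goes through verbatim.
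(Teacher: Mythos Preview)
Your proposal is correct and follows essentially the same approach as the paper: reduce to a finite-dimensional eigenmode decomposition, observe that on each mode the map has contraction factor $\abs{\mu_k}=\exp(-T\Im(\lambda_k))\le\exp(-T\lambda^*)<1$, and identify the limit with $u_k=-g_k/(\lambda_k+\omega^2)$. The paper phrases this as a contraction estimate $\abs{(\mathcal{S}_{T,g}v')_k-(\mathcal{S}_{T,g}v'')_k}\le\exp(-\lambda^*T)\abs{v'_k-v''_k}$ rather than summing the Neumann series explicitly, but the content is the same; your version is in fact more detailed (noting $\lambda_k+\omega^2\neq 0$ and the possible linear dependence of the $\Phi_k$).
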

\begin{proof}
  Under the assumption that $g=\sum_{k}g_k \Phi_k$, where the summation is finite, we can reduce the problem to a finite-dimensional setting.
  According to the formula \cref{eq:Sch-form-sol-exp}, for each $k$, we have
  \[
    \abs{(\mathcal{S}_{T,g}v')_k - (\mathcal{S}_{T,g}v'')_k} \leq \abs{\exp\RoundBrackets*{\i T (\lambda_k + \omega^2)}} \abs{v'_k - v''_k} \leq \exp(-\lambda^*T) \abs{v'_k - v''_k}.
  \]
  Thus, on each eigenmode, the fixed-point iteration converges exponentially.
  Moreover, we can explicitly calculate that $v_k=-g_k/(\lambda_k+\omega^2)$ if $\mathcal{S}_{T,g}v=v$.
\end{proof}
From the proof, we can see that the value of $\lambda^* T$ determines the convergence rate of the fixed-point iteration.

\subsection{An alternative time-domain solver}
Inspired by the WaveHoltz approach, we propose an alternative time-domain solver that employs a time integral over the Schr\"{o}dinger propagation to filter out undesired frequency components.
Specifically, we consider an operator
\[
  \mathcal{G}_{T,g}v\coloneqq 1/T\int_{0}^{T}\exp(\i \omega^2 t)\tilde{v}(t) \di t,
\]
where $\tilde{v}$ evolves according to \cref{eq:Sch-prop}.
We can establish the following theorem, demonstrating that the fixed point of the operator $\mathcal{G}_{T,g}$ is again the solution to the Helmholtz equation \cref{eq:helmholtz-absorbing}.
\begin{theorem} \label{thm:fixed-point-ver-2}
  Suppose that the source term $g$ admits a finite spectral decomposition as $g=\sum_{k}g_k \Phi_k$, $\lambda^*\geq 0$ and $\max_k\abs{\lambda_k+\omega^2} T > 2$.
  Then, starting from zero, the fixed-point iteration $v\mapsto \mathcal{G}_{T,g}v$ convergences, and the limit solves the Helmholtz equation \cref{eq:helmholtz-absorbing}.
\end{theorem}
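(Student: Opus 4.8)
The plan is to follow the proof of \cref{thm:fixed-point-ver-1} closely: since $g=\sum_k g_k\Phi_k$ is a finite sum and the iteration starts from $v=0$, every iterate stays in the finite-dimensional span of the $\Phi_k$ occurring in $g$, so it suffices to understand the action of $\mathcal{G}_{T,g}$ on a single eigenmode and then to exhibit a contraction factor strictly below one, uniformly over that finite set of modes.

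First I would compute the modewise action of $\mathcal{G}_{T,g}$. Plugging the variation-of-constants representation \cref{eq:Sch-form-sol} with initial datum $v$ (equivalently, the expansion \cref{eq:Sch-form-sol-exp}) into the definition of $\mathcal{G}_{T,g}$ and writing $\mu_k\coloneqq\lambda_k+\omega^2$, one finds $\exp(\i\omega^2 t)\tilde v_k(t)=\exp(\i t\mu_k)\,(v_k+g_k/\mu_k)-g_k/\mu_k$, so that after averaging over $(0,T)$,
\[
  (\mathcal{G}_{T,g}v)_k=\beta_k\,v_k+(\beta_k-1)\frac{g_k}{\mu_k},\qquad
  \beta_k\coloneqq\frac1T\int_0^T\exp(\i t\mu_k)\di t=\frac{\exp(\i T\mu_k)-1}{\i T\mu_k}.
\]
Thus on the $k$-th mode the iteration is an affine recursion with linear coefficient $\beta_k$; as long as $\beta_k\neq1$ its unique fixed point is $v_k=-g_k/\mu_k$ (here $\mu_k\neq0$, for otherwise \cref{eq:helmholtz-absorbing} would not be uniquely solvable), which is exactly the $k$-th spectral coefficient of the solution $u$ of \cref{eq:helmholtz-absorbing}. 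The theorem is therefore reduced to the single inequality $\max_k\abs{\beta_k}<1$.

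Next I would estimate $\abs{\beta_k}$. Because $\lambda^*\geq0$, every $\mu_k$ lies in the closed upper half-plane, so $\abs{\exp(\i T\mu_k)}=\exp(-T\Im\mu_k)\leq1$ and hence
\[
  \abs{\beta_k}\leq\frac{\abs{\exp(\i T\mu_k)}+1}{T\abs{\mu_k}}\leq\frac{2}{T\abs{\mu_k}}.
\]
This crude bound already gives $\abs{\beta_k}<1$ for the modes with $T\abs{\mu_k}>2$, which is the regime controlled by the hypothesis $\max_k\abs{\lambda_k+\omega^2}T>2$. For the remaining, near-resonant modes (where $T\abs{\mu_k}$ is small) the bound above is useless, and the contraction must instead come from cancellation in the time average rather than from the absorbing decay: writing $\mu_k=a_k+\i b_k$ with $b_k\geq0$ one has $\abs{\exp(\i T\mu_k)-1}^2=\exp(-2Tb_k)-2\exp(-Tb_k)\cos(Ta_k)+1$, and combining $0\leq1-\exp(-x)<x$ (for $x>0$) with the strict inequality $\abs{\sin x}<\abs{x}$ (for $x\neq0$) shows $\abs{\beta_k}<1$ for every $\mu_k\neq0$ in the closed upper half-plane.

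Finally, taking the maximum over the finitely many active modes gives $\rho\coloneqq\max_k\abs{\beta_k}<1$, so each scalar affine recursion $a\mapsto\beta_k a+(\beta_k-1)g_k/\mu_k$ converges geometrically (with rate $\abs{\beta_k}\leq\rho$) from $0$ to $-g_k/\mu_k$; summing over the modes yields $\mathcal{G}_{T,g}^{\,n}0\to u$, and passing to the limit in the continuous finite-dimensional equation $\mathcal{G}_{T,g}v=v$ confirms that the limit is a fixed point, which by the computation above solves \cref{eq:helmholtz-absorbing}. I expect the main obstacle to be the near-resonant part of the $\abs{\beta_k}<1$ estimate: there the gain is not the exponential factor $\exp(-T\Im\mu_k)$ but the oscillatory averaging encoded in the sinc-type multiplier $\beta_k$, and one must extract from it a strict contraction that stays uniform over the whole (finite) collection of modes.
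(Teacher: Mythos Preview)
Your modewise approach matches the paper's: both compute the affine recursion coefficient $\beta_k=(\exp(\i T\mu_k)-1)/(\i T\mu_k)$ and seek $|\beta_k|<1$. The paper, however, stops at the crude bound $|\beta_k|\le(|\exp(\i T\mu_k)|+1)/(T|\mu_k|)$ and then writes $\max_k|\mu_k|$ in the denominator, concluding contraction directly from the hypothesis $\max_k|\mu_k|T>2$. As you correctly noticed, that replacement goes the wrong way: the crude bound only yields $|\beta_k|<1$ for modes with $T|\mu_k|>2$, and a hypothesis on the \emph{maximum} says nothing about near-resonant modes with small $|\mu_k|$. Your additional sinc-type argument---using the decomposition $|\exp(\i T\mu_k)-1|^2=(1-e^{-Tb_k})^2+4e^{-Tb_k}\sin^2(Ta_k/2)$ together with $1-e^{-x}<x$ and $|\sin x|<|x|$---shows $|\beta_k|<1$ for every $\mu_k\ne0$ in the closed upper half-plane and is what actually closes the proof for the statement as written; in fact it makes the hypothesis on $\max_k|\mu_k|T$ superfluous for convergence. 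The paper's intended reading is almost certainly $\min_k$ in place of $\max_k$ (consistent with the discussion after the theorem, where the quantity is tied to $\norm{(\omega^2\mathcal{I}+\mathcal{H})^{-1}}^{-1}$), in which case the crude bound alone suffices and your extra step would be unnecessary---but your argument is the one that is complete for the theorem as stated.
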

\begin{proof}
  Again, we work on each eigenmode.
  Utilizing the formula \cref{eq:Sch-form-sol-exp}, we can derive that
  \begin{align*}
    \abs{(\mathcal{G}_{T,g}v')_k - (\mathcal{G}_{T,g}v'')_k} & \leq \abs{\frac{1}{T}\int_{0}^T \exp(\i t(\lambda_k+\omega^2)) \di t} \abs{v'_k-v''_k}                  \\
                                                             & \leq \frac{\abs{\exp(\i T (\lambda_k+\omega^2))}+1}{\max_k\abs{\lambda_k+\omega^2} T} \abs{v'_k-v''_k}.
  \end{align*}
  Clearly, the fixed-point iteration converges if $\max_k\abs{\lambda_k+\omega^2} T > 2$, and we can further examine that the fixed point is the solution to the Helmholtz equation.
\end{proof}
For this time-domain solver, the requirement on $\lambda^*$ is weaker than \cref{thm:fixed-point-ver-1}, while now the convergence rate now depends on $2/\RoundBrackets*{\max_k\abs{\lambda_k+\omega^2}T}$.
It is worth noting that the quantity $\max_k\abs{\lambda_k+\omega^2}$ relates to the well-posedness of the Helmholtz equation.
To see this, consider the eigenvalue problem $(\omega^2\mathcal{I}+\mathcal{H})\Phi=(\omega^2+\lambda)\Phi$, which yields
\[
  \norm{\Phi}_{L^2(\Omega^\#)} \leq \norm{\Phi}_{L^2(\Omega^\#)} \abs{\omega^2+\lambda} \norm{(\omega^2\mathcal{I}+\mathcal{H})^{-1}}_{L^2\rightarrow L^2}
\]
where $\mathcal{I}$ is the identity operator and $\norm{\cdot}_{L^2\rightarrow L^2}$ denotes the operator norm from $L^2(\Omega^\#)$ to itself.
Drawing on wavenumber-explicit regularity estimates \cite{Cummings2006,Lafontaine2022}, we can postulate that $\norm{(\omega^2\mathcal{I}+\mathcal{H})^{-1}}_{L^2\rightarrow L^2} \approx \bigO(\omega^{-1})$.
Consequently, the time horizon $T$ can be chosen as $\bigO(\omega^{-1})$ to  guarantee fixed-point iteration convergence.

\section{A matrix exponential preconditioner}
\label{sec:main}
As discussed earlier, implementing Helmholtz solvers based on the Schr\"{o}dinger equation requires numerically solving \cref{eq:Sch-prop}, typically using time discretization schemes like Euler, Crank--Nicolson, or Runge--Kutta methods.
While feasible, these approaches also introduce deterioration from temporal discretization errors.
Furthermore, for Helmholtz problems, perfectly matched layer (PML) methods usually outperform significantly the aforementioned complex absorbing potential method \cite{Berenger1994,Chew1996}.
It leaves unclear how to construct the Schr\"{o}dinger propagation operator in the PML case, as PMLs modify the Laplace operator into a non-self-adjoint form.


\subsection{Representations of matrix inverses via exponential integrators}
If we examine \cref{eq:Sch-form-sol} closely and group the integrand---
\[
  \exp(\i\omega^2\tau)\exp(\i\tau\mathcal{H}) \Rightarrow \exp(\i \tau(\omega^2\mathcal{I}+\mathcal{H})),
\]
where $\omega^2\mathcal{I}+\mathcal{H}$ is exactly  the Helmholtz operator we need to invert.
This observation motivates us to directly solve the linear system arising from the discretized Helmholtz equation.
Let $A$ be a matrix and $I$ be the identity matrix, the operator exponential form in \cref{eq:Sch-form-sol} further suggests considering the integral of the matrix exponential,
\[
  \int_{0}^t\exp(\i \tau A)\di \tau = \i A^{-1}\RoundBrackets*{I-\exp(\i t A)}
\]
which leads to an identity for $A^{-1}$ as
\begin{equation}
  \label{eq:magic-inverse}
  A^{-1} = -\i \RoundBrackets*{I-\exp(\i t A)}^{-1} \int_{0}^t\exp(\i \tau A)\di \tau,
\end{equation}
Then, if we assume that $A$ is diagonalizable, with all eigenvalues having positive imaginary parts, the inversion $\RoundBrackets*{I-\exp(\i t A)}^{-1}$ can be approximated by fixed point iterations as $\RoundBrackets*{I-\exp(\i t A)}^{-1} = \sum_{n=0}^{\infty} \exp(\i nt A)$.
Here, the matrix $-\i A$ is also called positive semistable in literature \cite{Benzi2005}.
We can utilize \emph{quadrature} rules to calculate the integral $\int_0^t \exp(\i \tau A) \di \tau$ as in \cite{Luo2022}, and the identity \cref{eq:magic-inverse} implies that $A^{-1}$ can be well approximated if the matrix exponential action $v\mapsto \exp(\i s A)v$ is efficiently computable \cite{Eiermann2006,AlMohy2011}.
A more elegant approach to handling the time integral involves using exponential integrators \cite{Hochbruck2010,AlMohy2011}.
We first introduce the following family of \emph{entire functions}:
\begin{definition}\label{def:psi}
  Let $l$ be a non-negative integer, the entire function $\psi_l:\Complex \rightarrow \Complex$ is defined by the recurrence relation:
  \[
    \psi_{l+1}(z) = \frac{\psi_l(z) - 1/l!}{z} \text{ with } \psi_0(z) = \exp(z), \quad \forall z\in\Complex.
  \]
\end{definition}
A direct calculation shows that the time integral evaluates to $\int_0^t \exp(\i \tau A) \di \tau=t\psi_1(\i t A)$.
This allows us to express the matrix inverse identity \cref{eq:magic-inverse} compactly in terms of $\psi$-functions:
\begin{equation}
  \label{eq:inverse-1}
  A^{-1} = -\i t \RoundBrackets*{\sum_{n=0}^{\infty} \psi_0(\i t A)^n}\psi_1(\i t A).
\end{equation}
Building on our time-domain solver with operator $\mathcal{G}_{T,g}$, we derive its matrix counterpart through double integration of $\exp(\i \tau A)$:
\begin{align*}
  \frac{1}{h}\int_{0}^{h} \int_{0}^{t}\exp(\i \tau A) \di \tau \di t & = \i A^{-1}\RoundBrackets*{I-\frac{1}{h}\int_{0}^{h}\exp(\i t A) \di t} \\
                                                                     & = \i A^{-1}\RoundBrackets*{I-\psi_1(\i h A)}.
\end{align*}
Equivalently, the left-hand side simplifies to
\[
  \frac{1}{h}\int_{0}^{h} \int_{0}^{t}\exp(\i \tau A) \di \tau \di t=h \psi_2(\i h A),
\]
giving another matrix inverse formula:
\[
  A^{-1} = -\i h \RoundBrackets*{I- \psi_1(\i h A)}^{-1}\psi_2(\i h A).
\]
However, unlike \cref{eq:inverse-1}, it remains unclear whether the matrix $\RoundBrackets*{I- \psi_1(\i h A)}^{-1}$ can be approximated by fixed-point iterations under the sole condition $\Im(\lambda^*)>0$.
This distinction is mathematically substantiated in \cref{thm:fixed-point-ver-2}, which requires an additional assumption $\max_k\abs{\lambda_k+\omega^2} T > 2$.
Consequently, we primarily employ the first inverse formula \cref{eq:magic-inverse} in subsequent analysis.

\subsection{The design of preconditioners}
For any matrix $M$, both the matrix inverse $M^{-1}$ and the matrix exponential $\exp(M)$ can be interpreted as matrix functions corresponding to the scalar functions $1/z$ and $\exp(z)$, respectively.
The key distinction, as highlighted in \cref{def:psi}, lies in their analytic properties: while the $\psi_l$ functions are entire and well-defined for all $z \in \mathbb{C}$, the function $1/z$ has a singularity at $z=0$.
As a result, the computational performance, particularly for large matrices, exhibits significant differences.
For example, while our understanding of restarted Krylov methods for solving linear systems remains quite limited, these methods have been shown to converge superlinearly when applied to entire matrix functions (see Corollary 4.3 in \cite{Eiermann2006}).
Furthermore, based on the power series expansion $\exp(M) = \sum_{n=0}^\infty M^n / n!$, the convergence rate for computing entire matrix functions can be effectively predicted by the norm of $M$, rather than the condition number of $M$ as in iterative linear solvers (cf.\ \cite{AlMohy2011,Carson2024}).
Consequently, employing $\psi_0(\i t A)$ and $\psi_1(\i t A)$ as foundational components for constructing preconditioners can be practical and efficient.

If we truncate the summation in \cref{eq:inverse-1} to $N+1$ terms, denoted by
\[
  P_N \coloneqq -\i t \RoundBrackets*{\sum_{n=0}^{N} \psi_0(\i t A)^n}\psi_1(\i t A),
\]
we obtain an approximation of $A^{-1}$.
Since $P_N$ is a matrix function that depends on $A$, it shares the same eigenvectors as $A$ and commutes with it.
For the eigenvalue $\lambda_k$ of $A$, the corresponding eigenvalue of $P_N A$ is given by
\[
  \RoundBrackets*{\sum_{n=0}^N \exp(\i n t \lambda_k)}\RoundBrackets*{1-\exp(\i t \lambda_k)} = 1 - \exp(\i (N+1) t \lambda_k).
\]
The modulus of $\exp(\i (N+1) t \lambda_k)$ thus determines the quality of the approximation provided by $P_N$, which can be formalized in the following theorem.
\begin{theorem}\label{thm:radius}
  Let \(\lambda \) be any eigenvalue of the matrix \(A\) with $\Im(\lambda) \geq \lambda^* > 0$.
  Then, the eigenvalues of $P_N A$ lie within a disk centered at $1$ with radius $\exp(-(N+1)t\lambda^*)$.
\end{theorem}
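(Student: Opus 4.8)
The plan is to reduce the statement to a one-line modulus estimate, since the essential algebra---the eigenvalue formula for $P_N A$---was already carried out just before the theorem.

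First I would promote the scalar identity recorded above the statement to the matrix level. Using $\psi_0 = \exp$ together with $-\i t\,\psi_1(\i t A) = A^{-1}\RoundBrackets*{I-\exp(\i t A)}$ (which follows from $t\psi_1(\i t A) = \int_0^t \exp(\i\tau A)\di\tau = \i A^{-1}\RoundBrackets*{I - \exp(\i t A)}$), the definition of $P_N$ collapses to
\[
  P_N A = \RoundBrackets*{\sum_{n=0}^N \exp(\i n t A)}\RoundBrackets*{I - \exp(\i t A)} = I - \exp\RoundBrackets*{\i (N+1) t A},
\]
the last equality being the telescoping of the matrix geometric series (all factors are functions of $A$, hence commute). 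This identity needs no assumption beyond invertibility of $A$.

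Next, $z\mapsto 1 - \exp(\i(N+1)tz)$ is entire, so by the spectral mapping theorem every eigenvalue of $P_N A$ equals $1 - \exp(\i(N+1)t\lambda)$ for some eigenvalue $\lambda$ of $A$; under the standing diagonalizability assumption one may equivalently read this off a common eigenbasis of $A$ and $P_N$. The distance of such an eigenvalue to the center $1$ is then
\[
  \abs{\exp\RoundBrackets*{\i(N+1)t\lambda}} = \exp\RoundBrackets*{-(N+1)\,t\,\Im(\lambda)},
\]
using $\abs{\exp(\i w)} = \exp(-\Im w)$ for $w\in\Complex$. Since $t>0$ and $\Im(\lambda)\ge \lambda^* > 0$ by hypothesis, the right-hand side is bounded above by $\exp(-(N+1)t\lambda^*)$, which is exactly the claimed radius.

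I do not expect a genuine obstacle here. The only points deserving a sentence of care are: recording explicitly that $t>0$ is part of the running setup (otherwise the bound on $\Im(\lambda)$ does not transfer to a bound on the exponential); and deciding whether to invoke the spectral mapping theorem in full generality or to argue via a shared eigenbasis, consistent with how diagonalizability is used elsewhere in this section---either route is routine. One could additionally remark that the radius is sharp, being attained whenever $A$ possesses an eigenvalue with imaginary part exactly $\lambda^*$, but this is not needed for the statement as given.
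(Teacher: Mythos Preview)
Your proposal is correct and follows essentially the same route as the paper: the paper records, just above the theorem, that the eigenvalue of $P_N A$ corresponding to $\lambda_k$ equals $1-\exp(\i (N+1)t\lambda_k)$, and the theorem is stated as an immediate formalization of that observation together with the modulus estimate $\abs{\exp(\i (N+1)t\lambda)}=\exp(-(N+1)t\,\Im(\lambda))\le\exp(-(N+1)t\lambda^*)$. Your only cosmetic difference is that you first promote the telescoping identity to the matrix level, $P_N A = I - \exp(\i(N+1)tA)$, before reading off eigenvalues, rather than working scalar-by-scalar from the outset; this is the same argument in a slightly different order.
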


\subsection{Numerical investigations on eigenvalue patterns}
\label{subsec:numerical-eigenvalue-patterns}
Beyond homogeneous Dirichlet boundary conditions, which are rare in practice, the mathematical tools for analyzing the eigenvalue distribution of Helmholtz operators remain limited.
To gain insights into the eigenvalue distribution of the Helmholtz operator, we turn to numerical evidence. Specifically, consider the following 1D equation:
\[
  \omega^2 u + \frac{1}{s}\frac{\di}{\di x}\RoundBrackets*{\frac{1}{s}\frac{\di u}{\di x}} = -f \quad \text{in } (-\delta, 1+\delta), \quad u(-\delta) = u(1+\delta) = 0,
\]
where
\begin{equation}
  \label{eq:pml-1d-s}
  s(x) = \begin{cases}
    1 + \i C_{\text{pml}} / (\delta \omega) (x / \delta)^2       & \forall x \in (-\delta, 0),  \\
    1                                                            & \forall x \in (0, 1),        \\
    1 + \i C_{\text{pml}} / (\delta \omega) ((x - 1) / \delta)^2 & \forall x \in (1, 1+\delta).
  \end{cases}
\end{equation}
Here, $\delta$ denotes the PML thickness, and $C_{\text{pml}}$ is a constant that can be adjusted to control the absorbing strength.
We employ the standard three-point finite difference (FD) discretization to obtain the matrix $A$, where $h$ is the uniform mesh size.
We also denote $\mathtt{ppw}$ as the number of points per wavelength, defined as $\mathtt{ppw} = 2\pi/\omega h$.

\begin{figure}[!ht]
  \centering
  \resizebox{\textwidth}{!}{\input{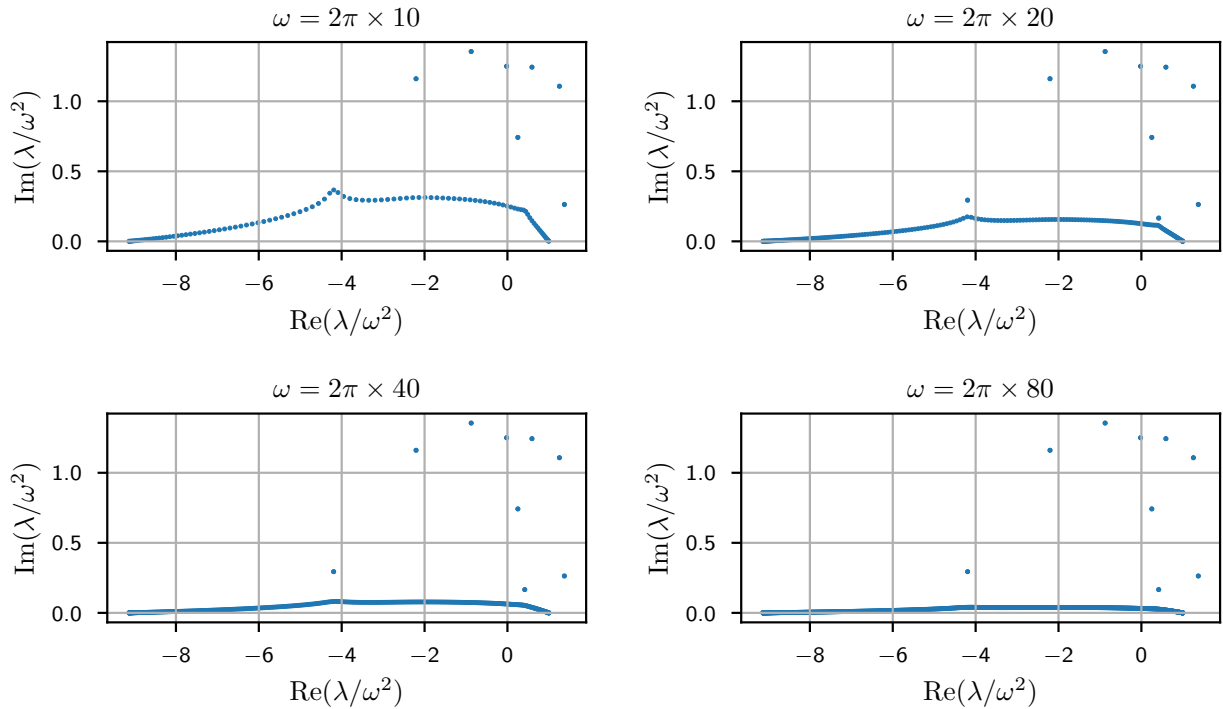}}
  \caption{Eigenvalues of the 1D Helmholtz operator with the PML method.
    The standard three-point FD method is used, with parameters set as $\mathtt{ppw} = 10$, $\delta = \mathtt{ppw} \times h$, and $C_{\text{pml}} = 20$.}\label{fig:pml-1d-eigval-dis}
\end{figure}

We begin by fixing $\mathtt{ppw} = 10$, $\delta = \mathtt{ppw} \times h$, and $C_{\text{pml}} = 20$, while varying $\omega \in \{10, 20, 40, 80\}$ to analyze the eigenvalue distribution of the matrix $A$.
These parameter values for $\mathtt{ppw}$, $\delta$, and $C_{\text{pml}}$ follow practical thumb rules commonly used in applications.
The 1D setting enables us to compute all eigenvalues in the complex plane, with the results presented in \cref{fig:pml-1d-eigval-dis}.
For improved visualization, the eigenvalues are scaled by $\omega^2$.
Two distinct patterns can be observed in the eigenvalue distribution: one accumulates and forms a curve above the real axis, while the other consists of isolated eigenvalues located farther from the real axis.
The first pattern is associated with the Helmholtz operator $\omega^2 u + u''$ defined on the inner domain, whereas the second pattern corresponds to the PML construction.
Additionally, it is evident that $\min_k \Im(\lambda_k) \geq 0$, and its quantitative relationship with respect to $\omega$ warrants further investigation through additional experiments.

\begin{figure}[!ht]
  \centering
  \resizebox{0.8\textwidth}{!}{\input{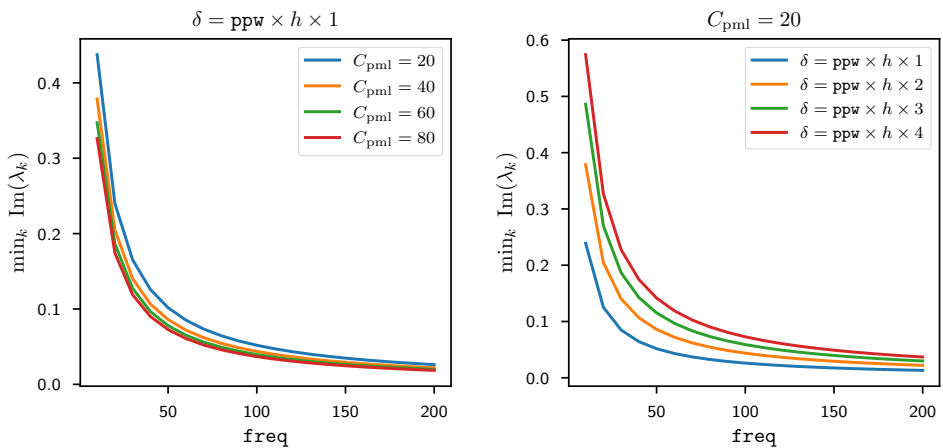}}
  \caption{The minimal imaginary part of eigenvalues $\min_k \Im(\lambda_k)$ changes w.r.t.\ frequency $\mathtt{freq}$: (Left) fixed PML width $\delta$ and (Right) fixed PML parameter $C_{\text{pml}}$.}\label{fig:pml-1d-lambda-C-width}
\end{figure}

We now set $\omega = 2\pi \times \mathtt{freq}$ with $\mathtt{freq} \in \{10, 20, \dots, 200\}$ to examine the behavior of the value of $\min_k \Im(\lambda_k)$.
Two groups of experiments are conducted: the first fixes $\delta = \mathtt{ppw} \times h$ while varying $C_{\text{pml}}$, and the second fixes $C_{\text{pml}} = 20$ while varying $\delta$, with $\mathtt{ppw}$ set to $10$.
The results are presented in \cref{fig:pml-1d-lambda-C-width}.
From both subplots, we observe that $\min_k \Im(\lambda_k)$ decreases as $\omega$ increases.
However, the rate of decrease is not uniform, and the pattern strongly suggests the existence of a limiting value as $\omega \to \infty$. This aligns with the conjecture that $\lambda^* > 0$.
In the left subplot of \cref{fig:pml-1d-lambda-C-width}, we see that smaller values of $C_{\text{pml}}$ result in larger $\min_k \Im(\lambda_k)$.
Similarly, the right subplot shows that a wider absorbing layer also leads to larger values of $\min_k \Im(\lambda_k)$.

\begin{figure}[!ht]
  \centering
  \resizebox{0.8\textwidth}{!}{\input{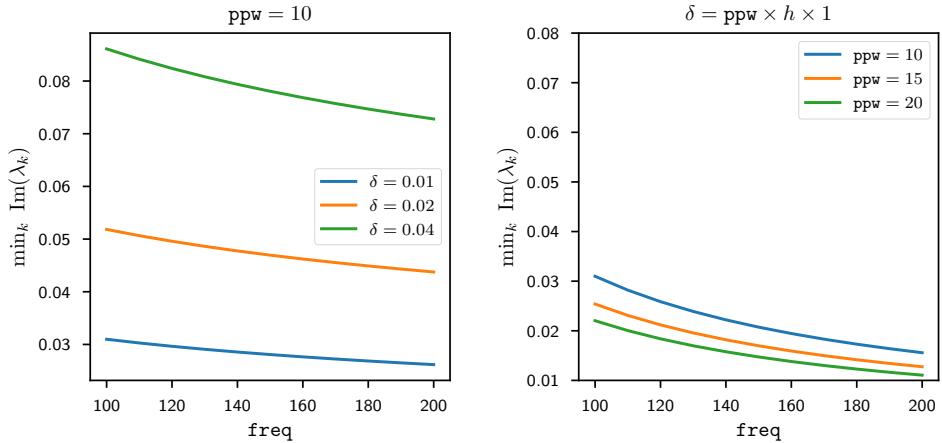}}
  \caption{The minimal imaginary part of eigenvalues $\min_k \Im(\lambda_k)$ changes w.r.t.\ frequency $\mathtt{freq}$: (Left) varying $\delta$ and (Right) varying $\mathtt{ppw}$.}\label{fig:pml-1d-delta-ppw}
\end{figure}

In \cite{Luo2022}, the layer used to impose the complex absorbing potential has a fixed width ($\geq 0.25$) that is independent of the mesh size $h$.
We also consider this scenario by setting $\delta \in \{0.01, 0.02, 0.04\}$, while keeping $C_{\text{pml}} = 20$ and $\mathtt{ppw} = 10$.
The results are presented in the left subplot of \cref{fig:pml-1d-delta-ppw}.
It is evident that the width of the absorbing layer significantly affects $\min_k \Im(\lambda_k)$.
For larger values of $\delta$, such as $\delta \geq 0.25$, the value of $\min_k \Im(\lambda_k)$ is expected to increase substantially.
However, in this case, a considerable number of DoFs become redundant, as PML methods are highly effective at absorbing outgoing waves even with smaller widths.
We also examine the effect of $\mathtt{ppw}$ on the minimal imaginary part of the eigenvalues, as higher-frequency problems typically require more points per wavelength for accuracy \cite{Babuska1997}.
We set $\mathtt{ppw} \in \{10, 15, 20\}$ and fix $\delta$ to one wavelength.
The results, shown in the right subplot of \cref{fig:pml-1d-delta-ppw}, reveal that increasing $\mathtt{ppw}$ leads to only a slight decrease in $\min_k \Im(\lambda_k)$.

\subsection{Numerical strategies}
Several numerical strategies can be employed to enhance the performance of the preconditioner.
The fixed-point iterations for solving
\begin{equation}
  \label{eq:fixed-point}
  \RoundBrackets*{I-\exp(\i t A)} v = b
\end{equation}
in \cref{eq:magic-inverse} can be accelerated using the GMRES method \cite{Saad1986}, which, as noted in \cite{Walker2011}, is essentially equivalent to Anderson acceleration for linear fixed-point problems.
It is worth emphasizing that the convergence behavior of GMRES for non-normal matrices is often unpredictable and cannot generally be inferred from the eigenvalue distribution of the matrix \cite{Greenbaum1996}.
Elementary analysis shows that the $n$-th step residual of GMRES applied to \cref{eq:fixed-point} satisfies
\begin{equation}
  \label{eq:gmres-residual}
  \frac{\norm{r_n}}{\norm{r_0}} \leq \Cond(X) \min_{p \in \mathcal{P}_n} \max_k \abs{p(1-\exp(\i t \lambda_k))},
\end{equation}
where $\mathcal{P}_n$ is the set of polynomials of degree at most $n$, and $X$ is the matrix formed by the eigenvectors.
The convergence can be hindered by the condition number of $X$.
Nevertheless, since all eigenvalues of $I-\exp(\i t A)$ cluster around $1$, GMRES has been observed to perform well in practice \cite{Benzi2004,Erlangga2007}.

As discussed earlier, the minimal imaginary part of the eigenvalues plays a crucial role in solving \cref{eq:fixed-point}, with larger values of $\lambda^*$ leading to faster convergence.
In \cref{subsec:numerical-eigenvalue-patterns}, we numerically demonstrate that widening the absorbing layer can effectively increase $\lambda^*$, albeit at the cost of introducing more DoFs.
An alternative approach to achieve this goal is to introduce a complex shift.
Instead of using the original Helmholtz operator $A$ as the input in the preconditioner, we consider a shifted operator:
\begin{equation}
  A_s \coloneqq A + \i \omega^2 s D,
\end{equation}
where $s$ is a positive constant, and $D$ is a positive-definite (commonly diagonal) matrix corresponding to the discretization of $1/c^2$.
The shifted operator $A_s$ is then substituted into \cref{eq:magic-inverse} to construct the preconditioner for $A$.
The use of such a shifted operator as an enhancement to the original preconditioner construction is not new; see \cite{Tsuji2014} for sweeping preconditioners and \cite{Calandra2012} for multigrid preconditioners.
As highlighted in \cite{Ernst2012,Gander2015}, complex shift preconditioners face a dilemma when solving high-frequency problems, balancing the approximability of $A_s$ and the effectiveness of multigrid methods on the shifted problem.
We demonstrate through numerical experiments that the proposed method is far less restrictive with respect to the choice of $s$.


As the preconditioning process involves iterative solvers for the fixed-point problem, the outer iteration for solving the Helmholtz equation should consider the flexible GMRES (FGMRES) variant \cite{Saad1993}, which allows for the preconditioner to change at each iteration.
Meanwhile, for iterative solvers such as FGMRES, the constant factor \( -\i t \) in \cref{eq:inverse-1} does not affect the preconditioned system and can therefore be ignored.
We summarize the above discussion in the following algorithm.
\begin{algorithm}[!ht]
  \caption{The preconditioning step in one FGMRES iteration for solving the Helmholtz equation $Au = b$.}
  \label{alg:preconditioner}
  \begin{algorithmic}[1]
    \Require The matrix $A_s$, the time length $t$, the relative tolerance $\mathtt{fprtol}$, the residual $r$, the output $w$
    \State Update $r$ by $r \leftarrow \psi_{1}(\i t A_s)r$, store the residual norm  as $z$
    \While{Inner GMRES does not achieve the relative tolerance $\mathtt{fprtol}$}
    \State Perform a GMRES iteration on $w$ and update the residual for the linear system $(I-\exp(\i t A_s))w=r$
    \EndWhile
    \State \Return $w$ for the outer iteration of FGMRES
  \end{algorithmic}
\end{algorithm}

Since the evolution of the solutions via the proposed method involves essentially only SpMV operations, the algorithm exhibits an intrinsic computational complexity lower bound of $\bigO(\omega)$.
To illustrate this, suppose the right-hand side is a delta function.
Due to the sparsity pattern of the discretized matrix, the support (i.e., the diameter) of the solution vector can increase by at most $\bigO(h)$ with each update involving one SpMV operation.
However, by examining the Green function of the Helmholtz equation, it is evident that the true solution decays slowly away from the delta source.
This observation implies that, in order to achieve reasonable accuracy, the support of the solution vector must remain bounded below by $\bigO(1)$.
Consequently, the number of SpMV operations required is at least $\bigO(\omega)$, by considering the fixed $\mathtt{ppw}$ regime.
We will show numerically that the proposed method achieve this optimal complexity.

\section{Numerical experiments}
\label{sec:numerical-experiments}
In this section, we present numerical experiments that both validate our proposed preconditioner and demonstrate its effectiveness for solving the Helmholtz equation.
Our implementation is built on the PETSc library, which provides infrastructures for parallel computing.
The PETSc was configured to use double precision for all complex arithmetic, and we notice that several reported results were obtained in single precision \cite{Calandra2012}.
Exploring multi-precision arithmetic is a promising direction for future research, as it aligns with the hardware architecture of modern computing accelerators \cite{Higham2022}.
To compute the matrix functions $\psi_l(M)$, we employ the SLEPc library \cite{Hernandez2005}, which implements the restarted Krylov method proposed in \cite{Eiermann2006}.
Both PETSc and SLEPc have been compiled with the ``-O3'' optimization flag and linked with Intel MKL and MPI libraries.
All experiments were conducted on a cluster where each node is equipped with dual Intel(R) Xeon(R) Gold 6258R CPUs (each with $28$ cores) and $192$ GB of RAM.
The computing times are reported using PETSc's profiling functionality (via the ``-log\_view'' option).
The source code is available on GitHub (\href{https://github.com/Laphet/matexpre.git}{https://github.com/Laphet/matexpre.git}).

Most of the numerical settings are based on the PML method with the configuration specified in \cref{eq:pml-1d-s} and $C_{\text{pml}}=20$.
The domain of interest is taken a rectangle (in 2D) or a rectangular cuboid (in 3D), with one wavelength of absorbing layer extending beyond the domain boundaries, where zero Dirichlet boundary conditions are imposed on the outer boundary.
For discretization, we employ the standard five-point FD method in 2D and the seven-point method in 3D.
In addition, we adhere to the convention that $\mathtt{ppw}\geq 10$, and hence for heterogeneous problems the relation $c_{\mathup{min}}{2\pi}/{\omega} \geq 10 \times h$
is always satisfied.
We recall that $\mathtt{freq}$ is denoted as the frequency with the relation $\omega = 2\pi \times \mathtt{freq}$.
The iterative solvers FGMRES and GMRES used in \cref{alg:preconditioner} are provided by PETSc with default settings (e.g., a restart parameter of $30$).
The outer iteration (FGMRES) stops when the relative tolerance falls below $10^{-5}$.
For the matrix functions $\exp(\cdot)$ and $\psi_1(\cdot)$, we again employ the default settings in SLEPc.

To gain an intuitive understanding of the parameter effects---especially for geological models---we apply \emph{spatial} and \emph{temporal} scaling for most numerical experiments, such that the maximum dimension of the inner domain (excluding PMLs) is $1$ and the minimal wave speed is standardized to $1$.

\subsection{2D homogeneous models}\label{subsec:2d-homogeneous-models}
We first consider the homogeneous case, i.e., $c \equiv 1$.
The inner domain is the unit square $(0,1)^2$, and we fix $\mathtt{ppw}=10$, such that the frequency $\mathtt{freq}$ and the mesh size $h$ satisfy $\mathtt{freq}\times h = 1/10$. The source term is defined as

\[
  f(x,y) = -\operatorname{normal}(x,y;\tfrac{1}{2}-6h, \tfrac{1}{2}, 3h)+\operatorname{normal}(x,y;\tfrac{1}{2}+6h, \tfrac{1}{2}, 3h),
\]
where $\operatorname{normal}(x,y;x^*,y^*,r)$ denotes the normalized Gaussian function centered at $(x^*,y^*)$ with radius $r$.

\subsubsection{Effect of the time length in exponential integrators}
In \cref{thm:radius} it is demonstrated that the time length \(t\) enters directly into the radius of the disk containing the eigenvalues of \(P_NA\).
One therefore expects that increasing \(t\) will improve the approximability of the matrix exponential.
However, a larger \(t\) also incurs a higher iteration count when computing the action of \(\exp(\i At)\).
According to \cite{AlMohy2011}, the matrix 1-norm \(\norm{M}_1\) provides a practical estimate of the number of Krylov iterations required to approximate \(\exp(M)\).
Since \(A\) comprises the discretized Laplace operator \(\Delta\) and the time-harmonic term \(\omega^2/c^2\), one obtains $\norm{A}_{1} \lesssim h^{-2}+\omega^{2}$.
In the fixed $\mathtt{ppw}$ regime, this relation simply reduces to $\norm{A}_{1} \lesssim \omega^2$.
Therefore, we will examine the iteration counts for computing $\exp(\i A t) v$ and $\psi_1(\i A t)v$ according to the rule $t\propto 1/ \mathtt{freq}^2$.
The vector $v$ is from the first step of the FGMRES iteration in \cref{alg:preconditioner}, and no shift is involved in this case.
The convergence criteria are determined by SLEPc in its default configuration by measuring the effectiveness of the Krylov subspace.
However, directly determining the accuracy of the matrix exponential is not straightforward, as a corresponding residual concept for the matrix exponential is not available.

The iteration count results are summarized in \cref{tab:exp-iter-counts}, where each iteration only requires one SpMV operation.
Notably, the counts required to compute $\exp(\i A t)v$ and $\psi_1(\i A t)v$ are nearly identical, with the latter being marginally more efficient.
Furthermore, by setting \(t\) proportional to \(1/\mathtt{freq}^2\), the iteration counts remain stable across varying frequencies.
These findings corroborate the theoretical prediction that the computation of $\exp(M)z$ and $\psi(M)z$ necessitates a number of SpMV operations proportional to $\norm{M}_1$, and the nearly linear growth of the counts with respect to $t$ further reinforces this conclusion.

\begin{table}[!ht]
  \centering
  \caption{The iteration counts for computing the exponential integrators $\exp(\i A t)v$ and $\psi_1(\i A t)v$ for various values of \(t\) proportional to \(1/\mathtt{freq}^2\), where $A$ is from discretizing the 2D homogeneous Helmholtz equation.
    The columns labeled $\psi_0$ and $\psi_1$ correspond to $\exp(\i A t)v$ and $\psi_1(\i A t)v$, respectively.}
  \label{tab:exp-iter-counts}
  \begin{footnotesize}
    \makegapedcells
    \begin{tabular}{c c c c c c c c c}
      \toprule
      \multirow{2}{*}{$\mathtt{freq}$} & \multicolumn{2}{c}{$t=\frac{0.4}{\mathtt{freq}\times \mathtt{freq}}$} & \multicolumn{2}{c}{$t=\frac{0.8}{\mathtt{freq}\times \mathtt{freq}}$} & \multicolumn{2}{c}{$t=\frac{1.2}{\mathtt{freq}\times \mathtt{freq}}$} & \multicolumn{2}{c}{$t=\frac{1.6}{\mathtt{freq}\times \mathtt{freq}}$}                                             \\
      \cline{2-9}
                                       & $\psi_0$                                                              & $\psi_1$                                                              & $\psi_0$                                                              & $\psi_1$                                                              & $\psi_0$ & $\psi_1$ & $\psi_0$ & $\psi_1$ \\
      \midrule
      $20$                             & $7$                                                                   & $7$                                                                   & $12$                                                                  & $12$                                                                  & $17$     & $17$     & $22$     & $21$     \\
      $40$                             & $7$                                                                   & $6$                                                                   & $12$                                                                  & $12$                                                                  & $18$     & $18$     & $23$     & $23$     \\
      $80$                             & $7$                                                                   & $6$                                                                   & $12$                                                                  & $12$                                                                  & $18$     & $18$     & $24$     & $23$     \\
      $160$                            & $7$                                                                   & $7$                                                                   & $12$                                                                  & $12$                                                                  & $18$     & $18$     & $24$     & $23$     \\
      \bottomrule
    \end{tabular}
  \end{footnotesize}
\end{table}

Because evaluating matrix functions is a critical component of the preconditioner, which is frequently invoked in solving the fixed-point system \cref{eq:fixed-point}, we adopt the setting \(t=\bigO(\omega^{-2})\) such that each matrix function evaluation requires an approximately constant number of SpMV operations.
In subsequent experiments, we choose \(t\) such that the average number of SpMV operations lies between $5$ and $10$.
This strategy is designed to outperform traditional Runge-Kutta methods (e.g., RK4, which requires $4$ SpMV operations per update) for evaluating the matrix exponential.

\subsubsection{The effect of the shift parameter}
\label{subsubsec:effect-of-shift}
We present visualizations of the computed solutions for frequencies $\mathtt{freq}\in \CurlyBrackets{40,80,160,320}$.
In these experiments, the shift parameter is set to $s=0$, the relative tolerance for the fixed-point system \cref{eq:fixed-point} is fixed at $\mathtt{fprtol}=0.01$, and the time length is chosen as $t=0.4/\mathtt{freq}^2$.

In all experiments, the outer FGMRES solver converges within $4$ iterations, whereas the inner GMRES solver applied to the fixed-point system \cref{eq:fixed-point} requires an iteration count that grows with frequency. \Cref{tab:gmres-iter-counts} summarizes, for each outer FGMRES iteration, the number of inner GMRES iterations and the corresponding relative residual.
Within each outer step (especially the 1st, 2nd, and 4th iteration), the inner iteration count increases almost \emph{linearly} in $\mathtt{freq}$, suggesting that \cref{alg:preconditioner} solves high-frequency Helmholtz problems with only $\bigO(\omega)$ SpMV operations, even without any complex shift.
The growth is consistent with the spectral-radius estimate of \cref{thm:radius}, where the parameters $\lambda^*$ and $t$ govern the clustering of eigenvalues of $\exp(\i t A)$ and deteriorate as $\mathtt{freq}$ increases.
Moreover, if we assume a stable $\lambda^*$ (cf.\ \cref{fig:pml-1d-lambda-C-width}), a plain fixed-point iteration would demand $\bigO(\omega^2)$ steps to achieve equivalent accuracy; the acceleration afforded by GMRES thus reduces the work of the inner solver to $\bigO(\omega)$.

\begin{table}[!ht]
  \caption{The number of inner GMRES iterations (``iter.'' column) and the relative residual (``res.'' column) in each outer FGMRES iteration for the 2D homogeneous Helmholtz equation, where $s=0$ and $t=0.4/\mathtt{freq}^2$.}
  \label{tab:gmres-iter-counts}
  \centering
  \begin{footnotesize}
    \makegapedcells
    \begin{tabular}{c c c c c c c c c}
      \toprule
      \multirow{2}{*}{$\mathtt{freq}$} & \multicolumn{2}{c}{1st FGMRES} & \multicolumn{2}{c}{2nd FGMRES} & \multicolumn{2}{c}{3rd FGMRES} & \multicolumn{2}{c}{4th FGMRES}                                                       \\
      \cline{2-9}
                                       & iter.                          & res.                           & iter.                          & res.                           & iter. & res.             & iter. & res.             \\
      \midrule
      $40$                             & $15$                           & $\num{2.391e-2}$               & $14$                           & $\num{5.584e-4}$               & $18$  & $\num{1.570e-6}$ & ---   & ---              \\
      $80$                             & $25$                           & $\num{1.545e-1}$               & $19$                           & $\num{6.932e-4}$               & $39$  & $\num{1.333e-5}$ & $30$  & $\num{1.091e-7}$ \\
      $160$                            & $43$                           & $\num{1.910e-1}$               & $30$                           & $\num{8.101e-4}$               & $97$  & $\num{1.808e-5}$ & $58$  & $\num{1.355e-7}$ \\
      $320$                            & $108$                          & $\num{8.609e-2}$               & $76$                           & $\num{1.663e-3}$               & $95$  & $\num{7.765e-5}$ & $110$ & $\num{3.641e-7}$ \\
      \bottomrule
    \end{tabular}
  \end{footnotesize}
\end{table}

Although the proposed preconditioner exhibits linear complexity in terms of SpMV operations, the inner GMRES process remains computationally expensive due to its slow convergence.
To accelerate the inner GMRES iterations, we investigate the introduction of a complex shift.
Naturally, the outer FGMRES iteration counts will increase since the preconditioner system now essentially involves solving a shifted problem, i.e., \(A_s z = r\).
Taking \(s=\bigO(\omega^{-\alpha})\) into account, the results in \cite{Gander2015} suggest that if \(\alpha>1\), a frequency-independent outer solver may be achieved; conversely, if \(\alpha<1\), the outer solver performance deteriorates significantly.
Therefore, we focus particularly on the borderline case \(\alpha=1\) and set \(s=1/\mathtt{freq}\), \(2/\mathtt{freq}\), \(3/\mathtt{freq}\) or \(4/\mathtt{freq}\) for numerical experiments.
Through this setting on $s$ and $t$, we can see that the spectral radius of $\exp(\i t A_s)$ scales as $1-\bigO(\omega^{-1})$.
We report the number of outer FGMRES iterations and the average number of inner GMRES iterations (mean with standard deviation) in \cref{tab:gmres-iter-counts-shift}.
Our results indicate that the proposed preconditioner is robust with respect to frequency when \(s\) is chosen proportional to \(1/\omega\), as the outer FGMRES iterations show only slight variations across the different cases.
As expected, a larger shift \(s\) leads to an increased number of outer iterations, although the trend appears sublinear, which may be attributed to the acceleration effect of the outer FGMRES solver.
Compared to \cref{tab:gmres-iter-counts}, it is noteworthy that the inner GMRES iterations are significantly reduced, and the pronounced variance seen previously is mitigated, as evidenced by the consistent standard deviations across all experiments.
Furthermore, larger complex shifts result in fewer inner iterations, with the reduction exhibiting a sublinear pattern.
These observations suggest that the optimal choice of the complex shift should balance the performance of both the outer and inner iterations.

\begin{table}[!ht]
  \caption{The number of outer FGMRES iterations (column labeled ``outer'') and inner GMRES iterations (column labeled ``inner'', presented as ``mean $\pm$ standard deviation'') for the 2D homogeneous Helmholtz equation under various shift parameters.
  }
  \label{tab:gmres-iter-counts-shift}
  \centering
  \begin{footnotesize}
    \makegapedcells
    \begin{tabular}{c c c c c c c c c}
      \toprule
      \multirow{2}{*}{$\mathtt{freq}$} & \multicolumn{2}{c}{$s=1/\mathtt{freq}$} & \multicolumn{2}{c}{$s=2/\mathtt{freq}$} & \multicolumn{2}{c}{$s=3/\mathtt{freq}$} & \multicolumn{2}{c}{$s=4/\mathtt{freq}$}                                                 \\
      \cline{2-9}
                                       & outer                                   & inner                                   & outer                                   & inner                                   & outer & inner         & outer & inner         \\
      \midrule
      $40$                             & $7$                                     & $5.5\pm 1.0$                            & $10$                                    & $4.3\pm 0.6$                            & $12$  & $3.2\pm 0.4$  & $14$  & $3.0\pm 0.0$  \\
      $80$                             & $7$                                     & $10.2\pm 1.6$                           & $10$                                    & $7.4\pm 1.2$                            & $12$  & $6.1\pm 1.0$  & $14$  & $4.9\pm 0.7$  \\
      $160$                            & $7$                                     & $19.8\pm 2.5$                           & $9$                                     & $14.8\pm 2.6$                           & $11$  & $11.7\pm 2.1$ & $13$  & $9.5\pm 1.5$  \\
      $320$                            & $7$                                     & $43.5\pm 5.6$                           & $9$                                     & $29.4\pm 4.6$                           & $11$  & $22.8\pm 4.1$ & $13$  & $18.4\pm 3.0$ \\
      \bottomrule
    \end{tabular}
  \end{footnotesize}
\end{table}

\subsubsection{Remark on solution evolution}

We investigate the evolution of the solution vector during the outer FGMRES iterations.
The parameters are set as $\mathtt{freq}=40$, $t=0.4/\mathtt{freq}^2$, and $s=4/\mathtt{freq}$.
\Cref{fig:solution-2d-iteration} displays the first four iterations; for improved visualization, the magnitude is capped at $0.01$.
It is observed that the solution vector is initially concentrated around the source term and that its support gradually expands as the iterations progress.
This behavior is consistent with our theoretical understanding for the algorithm.

\begin{figure}[!ht]
  \centering
  \begin{subfigure}[b]{0.240\textwidth}
    \includegraphics[width=\textwidth]{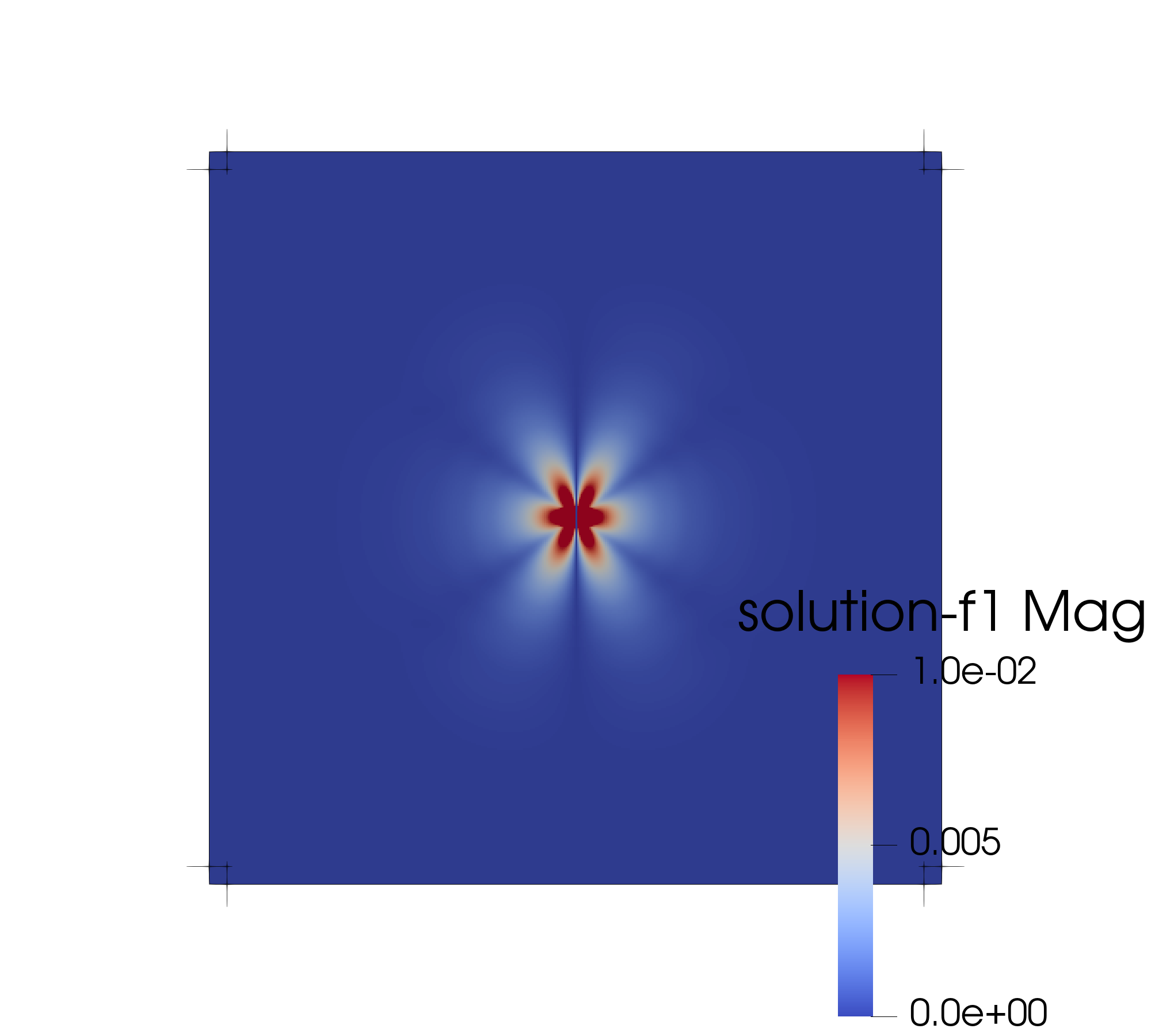}
    \caption{}
  \end{subfigure}
  \begin{subfigure}[b]{0.240\textwidth}
    \includegraphics[width=\textwidth]{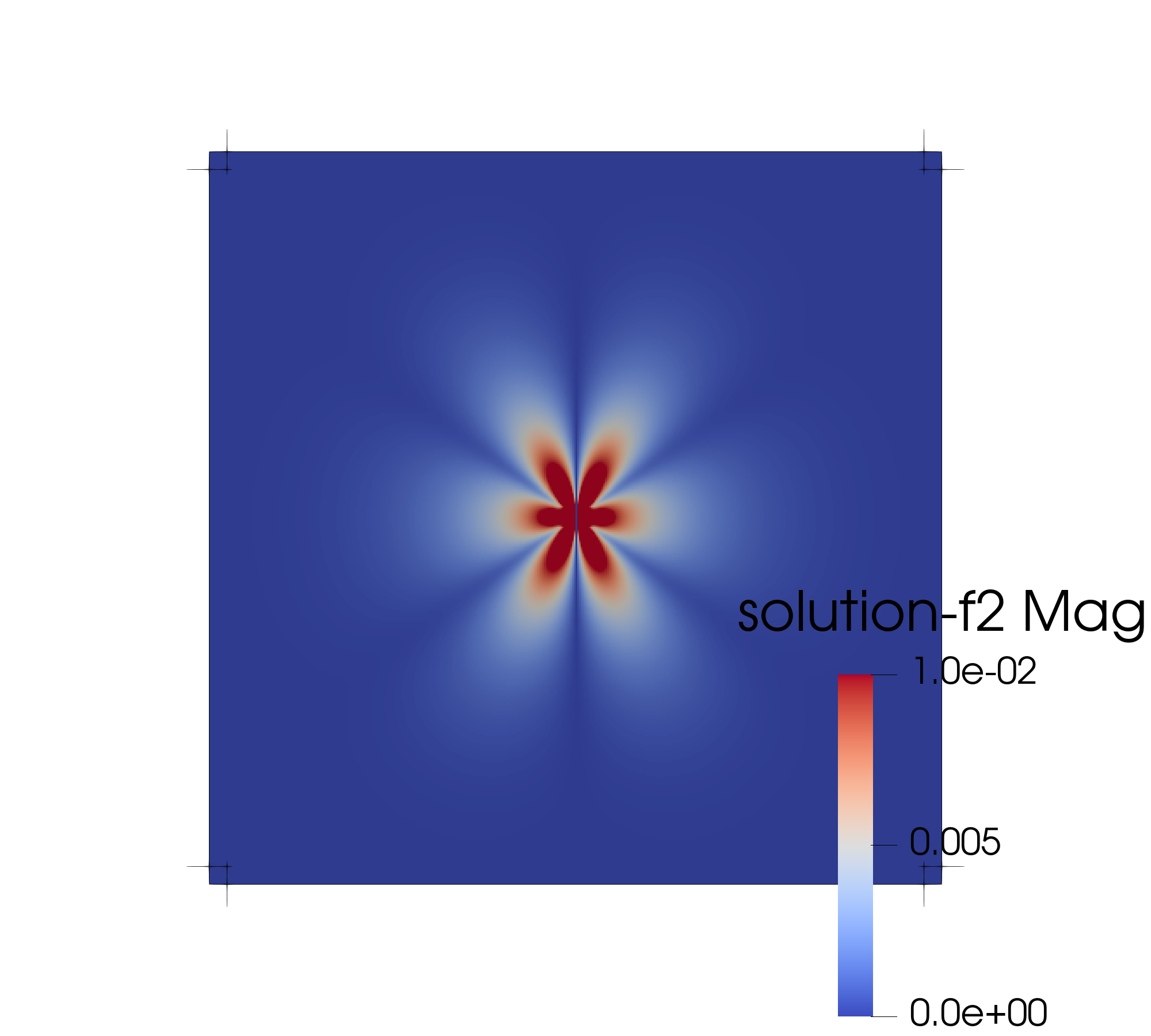}
    \caption{}
  \end{subfigure}
  \begin{subfigure}[b]{0.240\textwidth}
    \includegraphics[width=\textwidth]{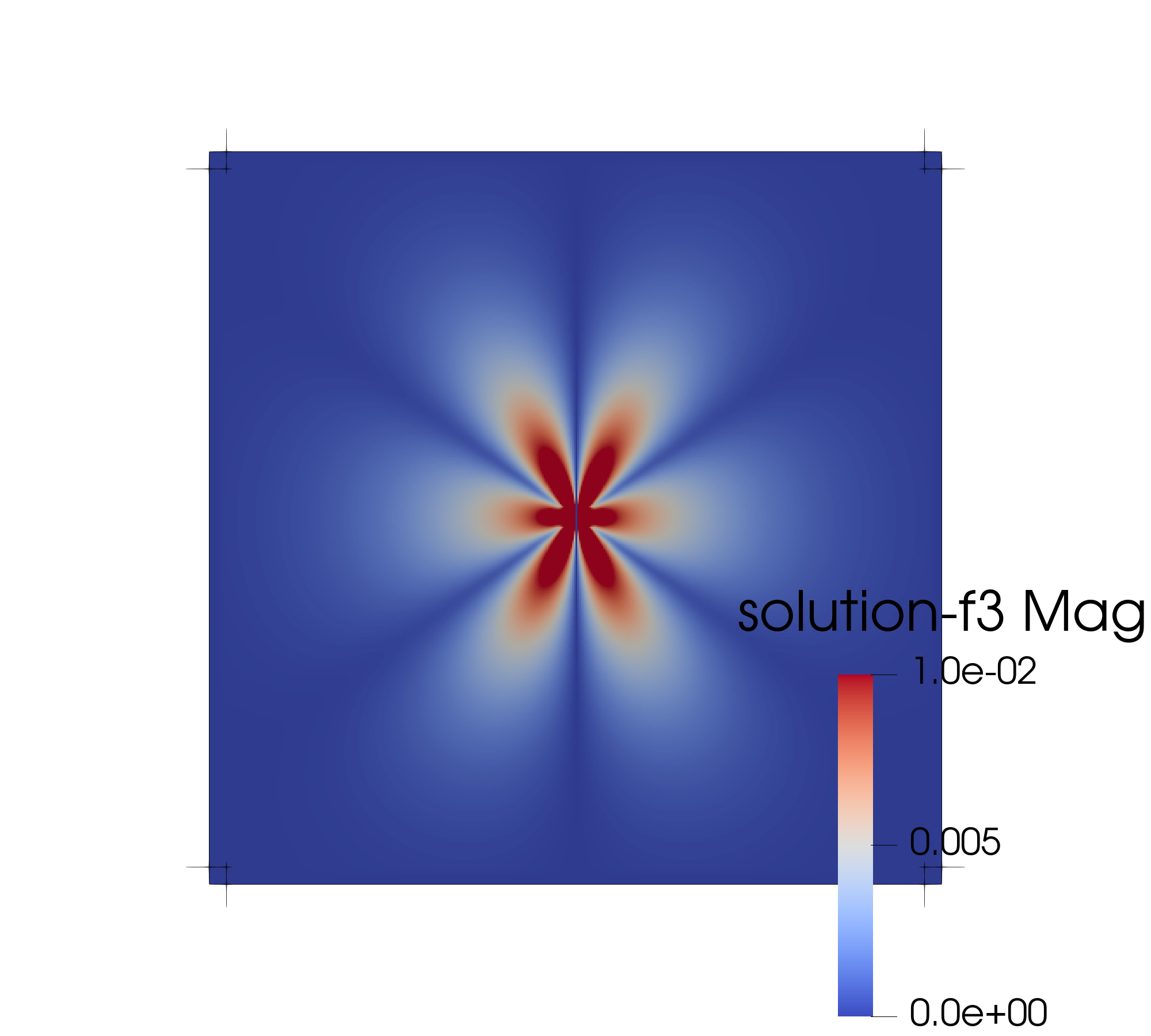}
    \caption{}
  \end{subfigure}
  \begin{subfigure}[b]{0.240\textwidth}
    \includegraphics[width=\textwidth]{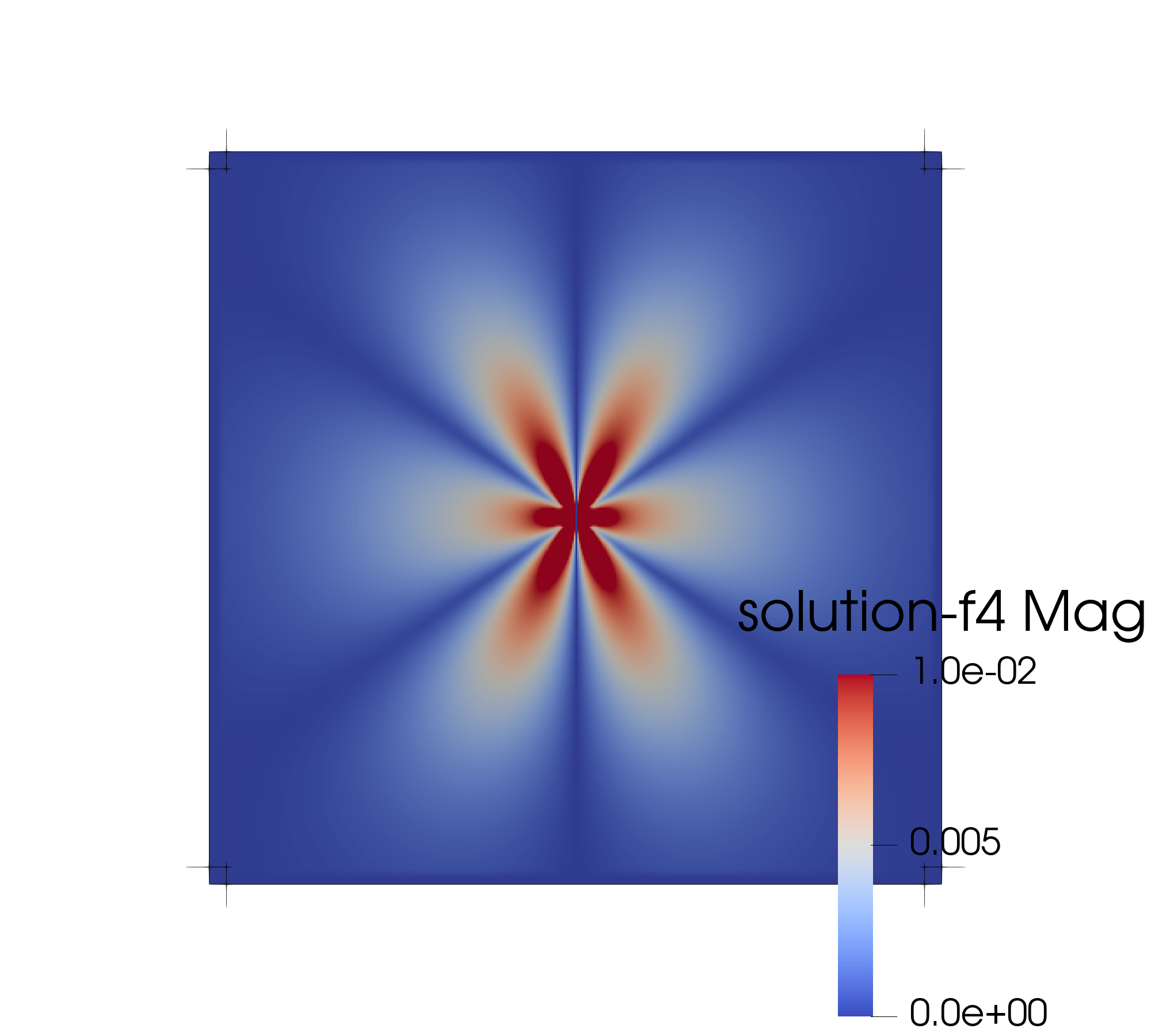}
    \caption{}
  \end{subfigure}
  \caption{The solution magnitude from the first four outer FGMRES iterations for the 2D homogeneous Helmholtz equation ($\mathtt{freq}=40$, $t=0.4/\mathtt{freq}^2$, and $s=4/\mathtt{freq}$).
    Subfigures (a)--(d) correspond to consecutive iterations.
    For better visualization, the magnitude is capped at $0.01$.}\label{fig:solution-2d-iteration}
\end{figure}

\subsection{3D homogeneous models}
We then extend our numerical experiments to 3D homogeneous models.
The inner domain now is a unit cube $(0,1)^3$, and we again fix $\mathtt{ppw}=10$.
The source term is defined as
\begin{align*}
  f(x,y,z) & = -\operatorname{normal}(x,y,z;\tfrac{1}{2}-6h, \tfrac{1}{2}, \tfrac{1}{2}, 3h)+\operatorname{normal}(x,y,z;\tfrac{1}{2}+6h, \tfrac{1}{2}, \tfrac{1}{2}, 3h)      \\
           & \quad -\operatorname{normal}(x,y,z;\tfrac{1}{2}, \tfrac{1}{2}-6h, \tfrac{1}{2}, 3h) +\operatorname{normal}(x,y,z;\tfrac{1}{2}, \tfrac{1}{2}+6h, \tfrac{1}{2}, 3h) \\
           & \quad -\operatorname{normal}(x,y,z;\tfrac{1}{2}, \tfrac{1}{2}, \tfrac{1}{2}-6h, 3h)+\operatorname{normal}(x,y,z;\tfrac{1}{2}, \tfrac{1}{2}, \tfrac{1}{2}+6h, 3h),
\end{align*}
where now $\operatorname{normal}$ denotes the 3D normalized Gaussian function (cf.\ the 2D case in \cref{subsec:2d-homogeneous-models}).

We determine an appropriate time length \(t\) for the 3D homogeneous Helmholtz equation by repeating the experiments described in \cref{tab:exp-iter-counts}.
The results, summarized in \cref{tab:exp-iter-counts-3d}, indicate that the iteration counts for computing \(\exp(\i A t)v\) and \(\psi_1(\i A t)v\) remain nearly identical.
However, compared to the 2D case, a slight increase in iteration counts is observed, which is expected because \(A\) is now obtained from a seven-point scheme that is less sparse than the five-point scheme in 2D.
By setting \(t=0.4/\mathtt{freq}^2\), the iteration counts remain stable across varying frequencies and stay below \(10\).
Therefore, we adopt this value of \(t\) in the subsequent experiments.

\begin{table}[!ht]
  \centering
  \caption{The iteration counts for computing the exponential integrators $\exp(\i A t)v$ and $\psi_1(\i A t)v$ for various values of \(t\) proportional to \(1/\mathtt{freq}^2\), where $A$ is from discretizing the 3D homogeneous Helmholtz equation.
    The columns labeled $\psi_0$ and $\psi_1$ correspond to $\exp(\i A t)v$ and $\psi_1(\i A t)v$, respectively.}
  \label{tab:exp-iter-counts-3d}
  \begin{footnotesize}
    \makegapedcells
    \begin{tabular}{c c c c c c c c c}
      \toprule
      \multirow{2}{*}{$\mathtt{freq}$} & \multicolumn{2}{c}{$t=\frac{0.4}{\mathtt{freq} \times \mathtt{freq}}$} & \multicolumn{2}{c}{$t=\frac{0.8}{\mathtt{freq} \times \mathtt{freq}}$} & \multicolumn{2}{c}{$t=\frac{1.2}{\mathtt{freq} \times \mathtt{freq}}$} & \multicolumn{2}{c}{$t=\frac{1.6}{\mathtt{freq} \times \mathtt{freq}}$}                                             \\
      \cline{2-9}
                                       & $\psi_0$                                                               & $\psi_1$                                                               & $\psi_0$                                                               & $\psi_1$                                                               & $\psi_0$ & $\psi_1$ & $\psi_0$ & $\psi_1$ \\
      \midrule
      $10$                             & $9$                                                                    & $9$                                                                    & $15$                                                                   & $16$                                                                   & $19$     & $20$     & $22$     & $22$     \\
      $20$                             & $9$                                                                    & $8$                                                                    & $16$                                                                   & $16$                                                                   & $22$     & $24$     & $27$     & $30$     \\
      $40$                             & $9$                                                                    & $8$                                                                    & $17$                                                                   & $16$                                                                   & $25$     & $24$     & $33$     & $32$     \\
      $80$                             & $9$                                                                    & $8$                                                                    & $17$                                                                   & $17$                                                                   & $25$     & $25$     & $33$     & $34$     \\
      \bottomrule
    \end{tabular}
  \end{footnotesize}
\end{table}

We provide a visualization of the solutions for the 3D homogeneous Helmholtz equation computed by the proposed method with frequency $\mathtt{freq}\in \CurlyBrackets{10, 20, 40, 80}$, as illustrated in \cref{fig:solution-3d-freq10}.
The remaining parameters will be exposed later.
For clarity, the magnitude is limited to the range $[-0.1,0.1]$.
It is evident that the intricate wave structures become more pronounced at higher frequencies, and all cases exhibit distinct magnitude ``valleys'', which are likely attributable to the right-hand side being a superposition of several poles.

\begin{figure}[!ht]
  \centering
  \begin{subfigure}[b]{0.24\textwidth}
    \includegraphics[width=\textwidth]{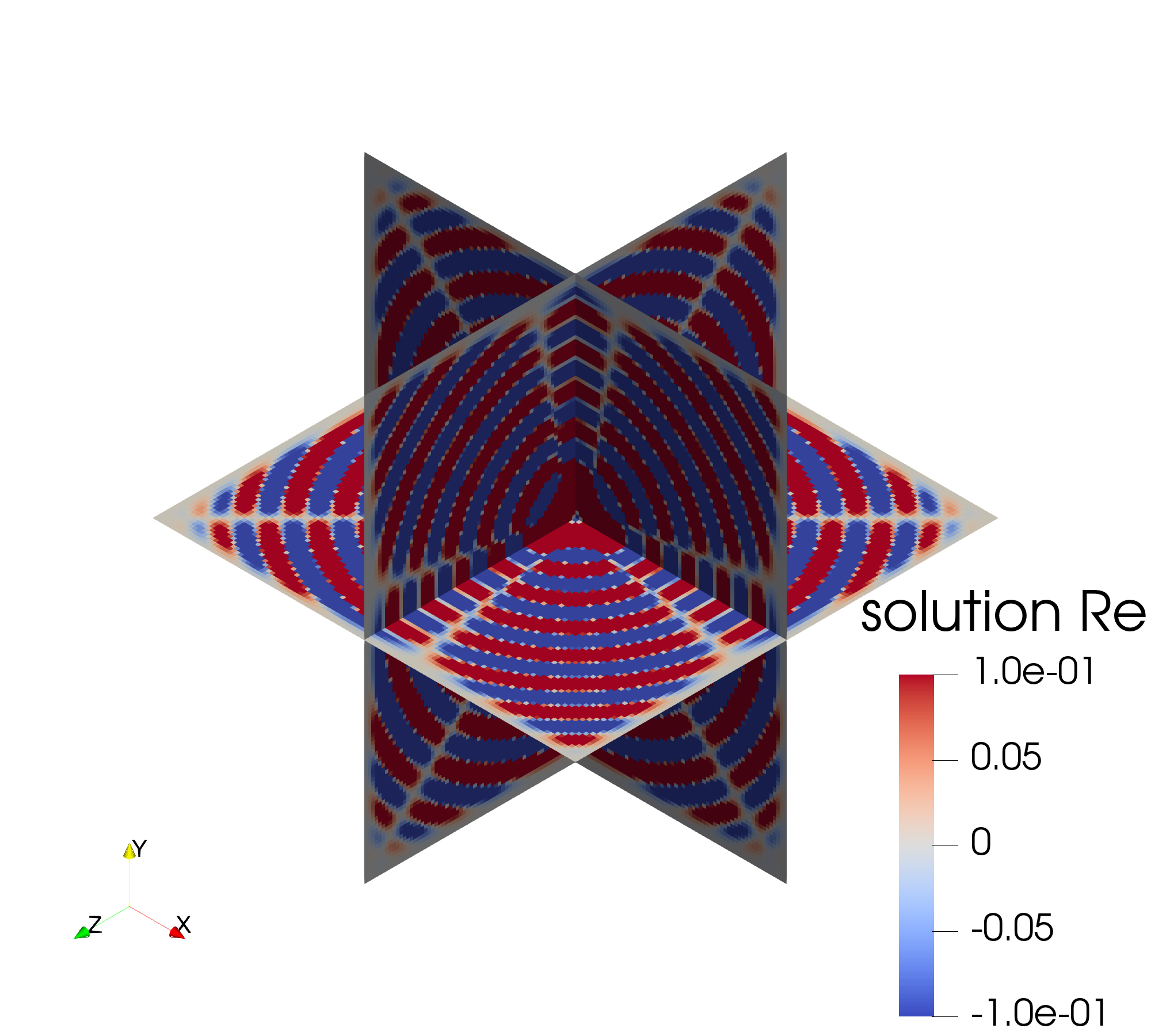}
    \caption{}
  \end{subfigure}
  \begin{subfigure}[b]{0.24\textwidth}
    \includegraphics[width=\textwidth]{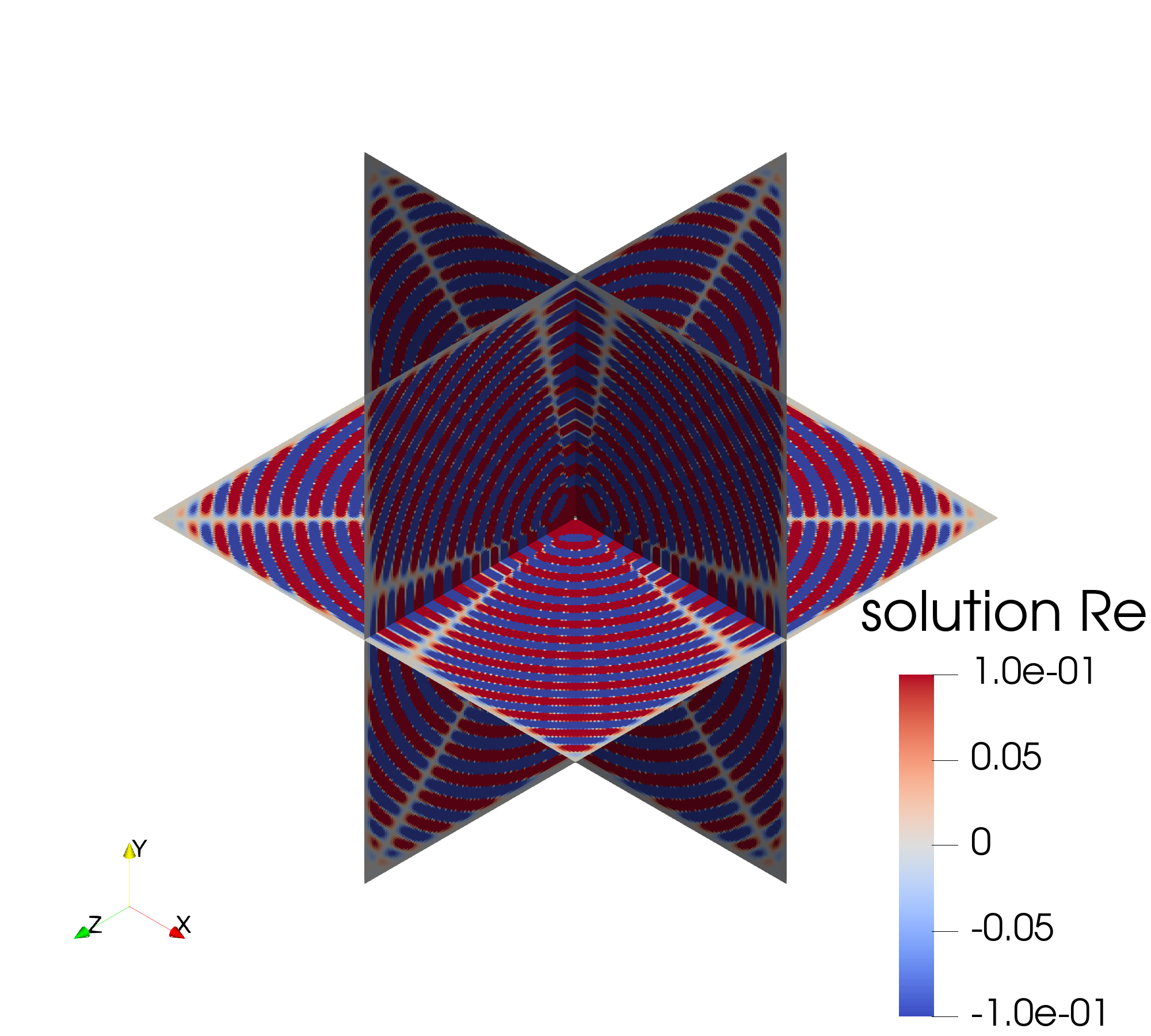}
    \caption{}
  \end{subfigure}
  \begin{subfigure}[b]{0.24\textwidth}
    \includegraphics[width=\textwidth]{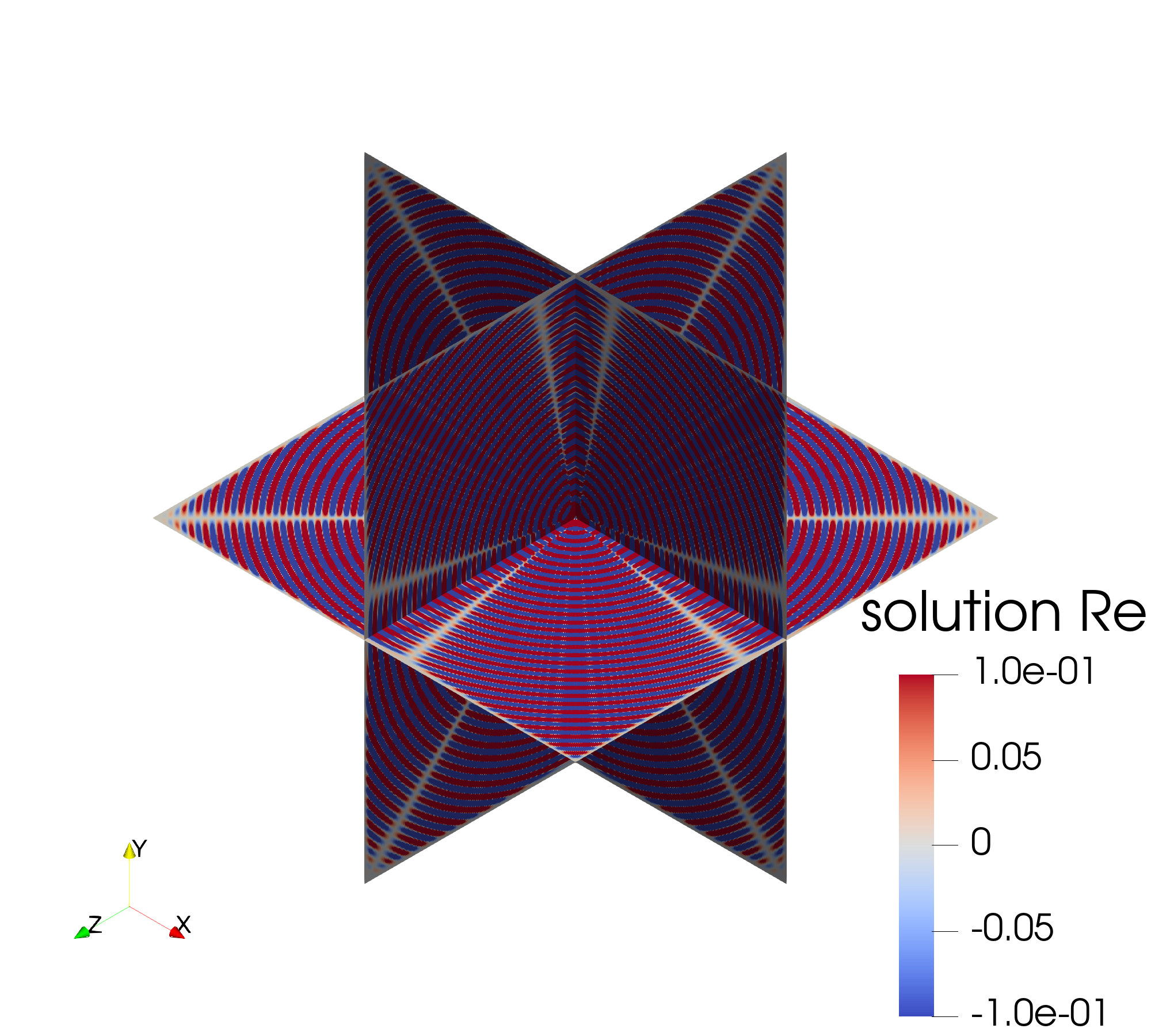}
    \caption{}
  \end{subfigure}
  \begin{subfigure}[b]{0.24\textwidth}
    \includegraphics[width=\textwidth]{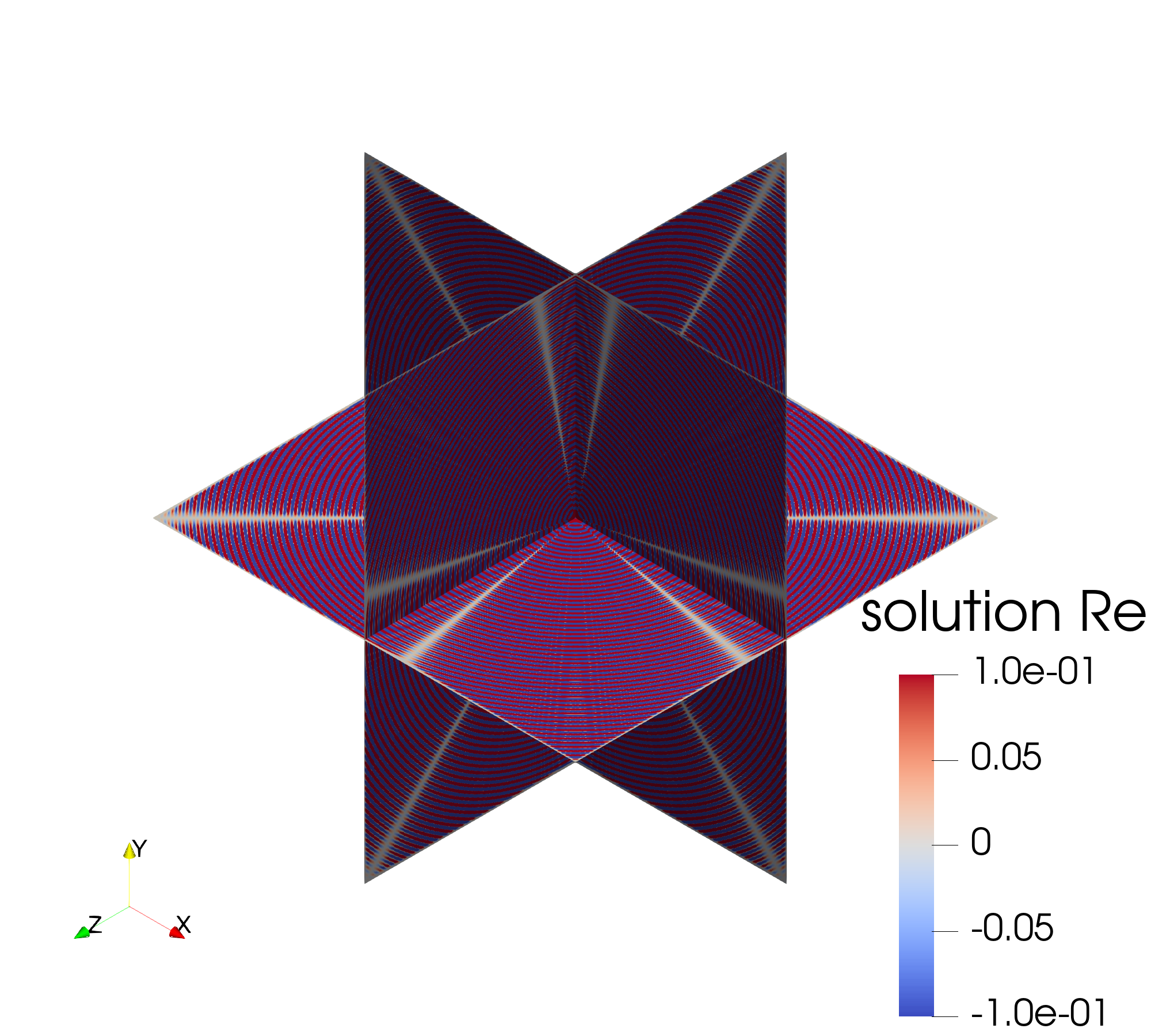}
    \caption{}
  \end{subfigure}
  \caption{Magnitude of the solutions for the 3D homogeneous Helmholtz equation at frequencies
    $\mathtt{freq}=10$, $20$, $40$, and $80$.
    Subfigures (a)--(d) correspond to each frequency, respectively.}\label{fig:solution-3d-freq10}
\end{figure}

\subsubsection{The effect of the fixed-point tolerance}
Without a shift, if the fixed-point system is solved and to high precision, the outer FGMRES is expected to converge in only a few iterations.
This is because we are essentially solving an equivalent system by \cref{eq:magic-inverse} that involves matrix exponentials, rendering the inner tolerance $\mathtt{fprtol}$ somewhat redundant compared to the accuracy of the outer solver.
However, as demonstrated in \cref{subsubsec:effect-of-shift}, since the shift is employed to both accelerate and stabilize the overall convergence, it is necessary to choose an appropriate inner tolerance that effectively balances the convergence behavior of the outer FGMRES with the overall efficiency.

We conduct a series of experiments with $s=1/\mathtt{freq}$ and $t=0.4/\mathtt{freq}^2$, while varying the inner tolerance $\mathtt{fprtol}\in \CurlyBrackets{0.01,0.02,0.04,0.08}$.
The results are tabulated in \cref{tab:gmres-iter-counts-shift-3d}.
Notably, the outer FGMRES iterations remain constant at $8$ across all cases, demonstrating that the combined inner and outer solvers are robust even when the inner tolerance is as high as nearly $0.1$.
Furthermore, increasing the inner tolerance yields significant savings in the inner GMRES iterations for higher frequencies; for example, the average number of inner iterations decreases from $11.1$ to $7.0$ for $\mathtt{freq}=80$.
We postulate that the acceleration provided by the inner GMRES solver contributes to a nearly invariant inner iteration count across different tolerances.
Moreover, \cref{tab:gmres-iter-counts-shift-3d} indicates that the chosen parameters $s$ and $t$ result in a linear growth of SpMV operations with respect to frequency, as the inner GMRES iterations are almost proportional to $\mathtt{freq}$.
Lastly, compared with the data in \cref{tab:gmres-iter-counts}, our method does not exhibit significant performance degradation in the 3D case, unlike several existing methods that rely heavily on domain transmission techniques \cite{Engquist2011,Chen2013,Gander2019}.

\begin{table}[!ht]
  \caption{The number of outer FGMRES iterations (column labeled ``outer'') and inner GMRES iterations (column labeled ``inner'', presented as ``mean $\pm$ standard deviation'') for the 3D homogeneous Helmholtz equation under various fixed-point tolerances.}
  \label{tab:gmres-iter-counts-shift-3d}
  \centering
  \begin{footnotesize}
    \makegapedcells
    \begin{tabular}{c c c c c c c c c}
      \toprule
      \multirow{2}{*}{$\mathtt{freq}$} & \multicolumn{2}{c}{$\mathtt{fprtol}=0.01$} & \multicolumn{2}{c}{$\mathtt{fprtol}=0.02$} & \multicolumn{2}{c}{$\mathtt{fprtol}=0.04$} & \multicolumn{2}{c}{$\mathtt{fprtol}=0.08$}                                               \\
      \cline{2-9}
                                       & outer                                      & inner                                      & outer                                      & inner                                      & outer & inner        & outer & inner        \\
      \midrule
      $10$                             & $8$                                        & $2.1\pm 0.3$                               & $8$                                        & $2.0\pm 0.0$                               & $8$   & $2.0\pm 0.0$ & $8$   & $2.0\pm 0.5$ \\
      $20$                             & $8$                                        & $3.6\pm 0.6$                               & $8$                                        & $3.1\pm 0.3$                               & $8$   & $2.5\pm 0.7$ & $8$   & $2.2\pm 0.4$ \\
      $40$                             & $8$                                        & $5.6\pm 0.9$                               & $8$                                        & $5.0\pm 1.0$                               & $8$   & $4.3\pm 0.8$ & $8$   & $3.7\pm 0.8$ \\
      $80$                             & $8$                                        & $11.1\pm 1.7$                              & $8$                                        & $9.8\pm 1.8$                               & $8$   & $8.6\pm 1.9$ & $8$   & $7.0\pm 1.5$ \\
      \bottomrule
    \end{tabular}
  \end{footnotesize}
\end{table}

\subsubsection{Scalability tests}
Our parallel implementation of the proposed preconditioner is based on the MPI standard, which distributes the matrix \(A\) and associated vectors among the available processes.
Message passing is employed to construct Krylov subspaces---encompassing SpMV operations and inner products---for computing matrix functions, as well as inner GMRES and outer FGMRES iterations.
We utilize the DMDA module in PETSc to manage data distribution and communication on structured grids, whose design philosophy is to minimize communication overhead and maximize data locality.
Since the proposed preconditioner requires no matrix factorization or explicit sequential routines, we anticipate its parallel performance to exhibit satisfactory scalability.
For the forthcoming numerical experiments, we adopt the following parameters: \(s=1/\mathtt{freq}\), \(t=0.4/\mathtt{freq}^2\), and \(\mathtt{fprtol}=0.08\).

To validate the strong scalability of the proposed method, we test two cases, \(\mathtt{freq}=40\) and \(80\), using varying numbers of computing nodes.
In the first case, the total number of DoFs is approximately \(420^3\), while in the second case it is around \(820^3\); both linear systems are large and require substantial computational resources.
For \(\mathtt{freq}=40\), we employ \(10\), \(15\), \(20\), \(25\), and \(30\) nodes, whereas for \(\mathtt{freq}=80\) we use \(20\), \(40\), \(60\), and \(80\) nodes.
The wall-clock time for each case is recorded and presented in the left part of \cref{fig:strong-scaling}; a speed-up is observed with an increase in the number of nodes (for example, the computing time for \(\mathtt{freq}=80\) decreases from \(\num{4.887e+3}\) seconds to \(\num{1.371e+3}\) seconds as the number of nodes increases from \(20\) to \(80\)).
To further quantify strong scalability, we introduce the parallel efficiency defined as $\RoundBrackets{\mathtt{bench\_time} \times \mathtt{bench\_nodes}}/\RoundBrackets{\mathtt{time} \times \mathtt{nodes}}$,
where \(\mathtt{bench\_time}\) and \(\mathtt{bench\_nodes}\) represent the wall-clock time and number of nodes for the benchmark case (i.e., \(\mathtt{freq}=40\) with \(10\) nodes and \(\mathtt{freq}=80\) with \(20\) nodes, respectively).
An efficiency close to \(1\) indicates that the proposed method exhibits nearly linear strong scalability.
As shown in the right part of \cref{fig:strong-scaling}, the parallel efficiency for \(\mathtt{freq}=40\) even exceeds \(1\), which may be attributed to factors such as cache effects in the benchmark case, while for \(\mathtt{freq}=80\) the efficiency is around \(80\%\), primarily due to increased communication overhead with more nodes.
We anticipate further performance improvements if the per-node computing power is enhanced, for instance through CPU-GPU hybrid computing, which can mitigate the communication overhead observed with many nodes.

\begin{figure}[!ht]
  \centering
  \resizebox{\textwidth}{!}{\input{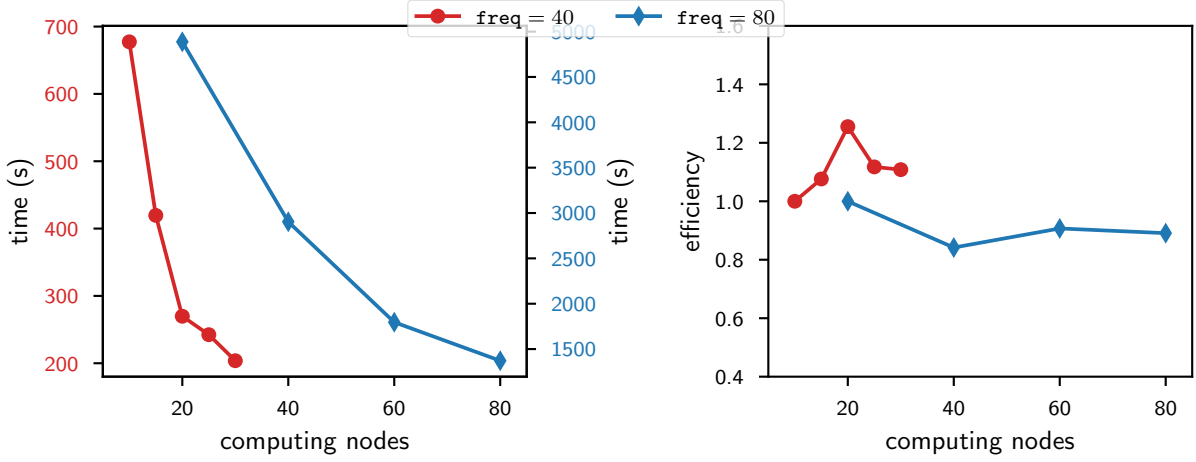}}
  \caption{Strong scalability results for the 3D homogeneous Helmholtz equation with \(\mathtt{freq}=40\) or \(80\). The left plot shows the wall-clock time for different numbers of nodes, while the right plot illustrates the corresponding parallel efficiency.}\label{fig:strong-scaling}
\end{figure}

Since the convergence of the proposed method depends linearly on the frequency, analyzing weak scalability--i.e., measuring the solution time while keeping the number of DoFs per node constant---is subtle.
Instead of recording the total solution time, we measure the time required to compute the matrix function \(\psi_1(\i A t)v\) followed by one step of fixed-point iteration, which constitutes the most computationally expensive part of the algorithm.
We vary the frequency over \(\CurlyBrackets{20, 30,\dots, 80}\) and set the number of nodes as $\mathtt{nodes} = \ceil{\RoundBrackets*{{\mathtt{freq}}/{20}}^3}$,
with the results illustrated in \cref{fig:weak-scaling}.
It can be observed that the computing time remains in the range of \(40\) to \(50\) seconds, corresponding to a weak scalability efficiency of approximately \(80\%\).

\begin{figure}[!ht]
  \centering
  \resizebox{0.8\textwidth}{!}{\input{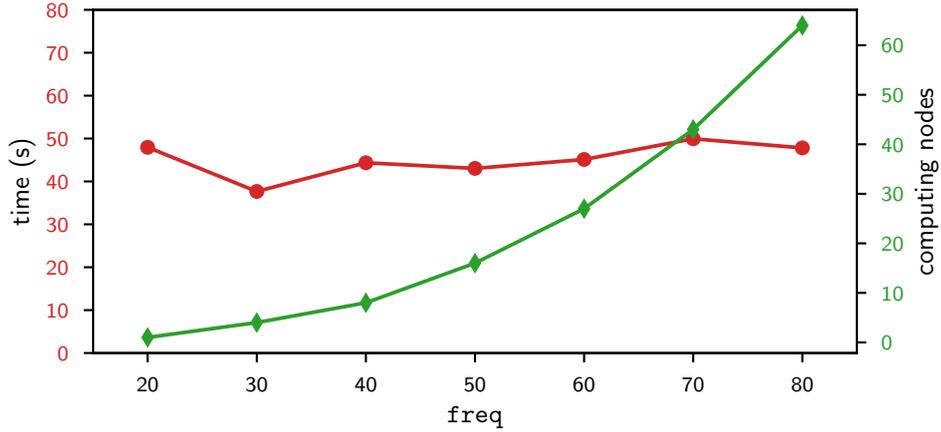}}
  \caption{Weak scalability results for the 3D homogeneous Helmholtz equation.
    The plot shows the time required for computing $\psi_1(\i A t)v$ and performing one fixed-point iteration, with the number of nodes scaled cubically to the frequency.}\label{fig:weak-scaling}
\end{figure}

\subsection{Inhomogeneous models}
\label{subsec:inhomo-models}
In this section, we apply the proposed preconditioner to inhomogeneous models, including smooth models and geological models.
The experiments on homogeneous models provide an insight into the selection of the parameters, and hence we mainly follow the previous settings.

\subsubsection{Converging lens models}
For the 2D converging lens model, we consider a smooth velocity profile defined as
\[
  c(x, y) = 1 - \frac{1}{2}\exp\RoundBrackets*{-\frac{(x-x_0)^2+(y-y_0)^2}{2\sigma^2}},
\]
with \((x_0, y_0)=(0.5,\, 0.1)\) and \(\sigma=1/32\).
Similarly, for the 3D converging lens model the velocity profile is given by
\[
  c(x, y, z)= 1 - \frac{1}{2}\exp\RoundBrackets*{-\frac{(x-x_0)^2+(y-y_0)^2+(z-z_0)^2}{2\sigma^2}},
\]
where \((x_0, y_0, z_0)=(0.5,\, 0.25,\, 0.5)\) and \(\sigma=1/32\).
These models exhibit a region of reduced velocity that acts as a converging lens.
Notice that \(c_{\mathup{min}}=1/2\), and in order to satisfy \(\mathtt{ppw}=10\) it is necessary that the mesh size \(h\) and the frequency \(\mathtt{freq}\) fulfill the relation \(h={1}/(20 \times \mathtt{freq})\).
The source term is taken as a delta function located at the center of the inner domain.

We set $\mathtt{freq}\in \CurlyBrackets{40,80,160}$ for the 2D case and $\mathtt{freq}\in \CurlyBrackets{10,20,40}$ for the 3D case.
Since the iteration counts for computing the matrix functions are largely influenced by the mesh size, and the $h$-to-$\omega$ relation has been modified (e.g., from $h=1/(10\times\mathtt{freq})$ to $h=1/(20\times\mathtt{freq})$), we now set $t=0.1/\mathtt{freq}^2$.
The shift parameter is set as before, i.e., $s=1/\mathtt{freq}$, and the inner tolerance is fixed at $\mathtt{fprtol}=0.08$.
The inner and outer iteration counts are reported in \cref{tab:gmres-iter-counts-lens}, where the PML layer now contains $20$ points, corresponding to one wavelength near the boundary region.
Notably, the outer FGMRES iteration counts remain stable across different frequencies, while the inner GMRES iterations exhibit a linear growth.
These patterns are consistent with the homogeneous case; however, the inner GMRES iterations are increased by almost a factor of $2$ compared to the homogeneous cases (cf.\ \cref{tab:gmres-iter-counts-shift-3d}).
This is likely because the spectral radius alone cannot predict the convergence behavior of GMRES methods, i.e., the value $\Cond(X)$ in \cref{eq:gmres-residual} may increase for inhomogeneous models.

\begin{table}[!ht]
  \caption{The number of outer FGMRES iterations (column labeled ``outer'') and inner GMRES iterations (column labeled ``inner'', presented as ``mean $\pm$ standard deviation'') for the 2D and 3D converging lens model.}
  \label{tab:gmres-iter-counts-lens}
  \centering
  \begin{footnotesize}
    \makegapedcells
    \begin{tabular}{c c c c c c c c}
      \toprule
      \multicolumn{4}{c}{2D model} & \multicolumn{4}{c}{3D model}                                                                              \\
      \cline{1-4} \cline{5-8}
      $\mathtt{freq}$              & DoFs                         & outer & inner          & $\mathtt{freq}$ & DoFs    & outer & inner         \\
      \midrule
      $40$                         & $840^2$                      & $11$  & $12.1\pm 2.5$  & $10$            & $240^3$ & $8$   & $3.8\pm 0.7$  \\
      $80$                         & $1640^2$                     & $10$  & $22.5\pm 5.4$  & $20$            & $440^3$ & $7$   & $7.5\pm 1.3$  \\
      $160$                        & $3240^2$                     & $10$  & $37.1\pm 13.6$ & $40$            & $840^3$ & $9$   & $12.7\pm 2.9$ \\
      \bottomrule
    \end{tabular}
  \end{footnotesize}
\end{table}

\subsubsection{SEG/EAGE salt model}
SEG/EAGE is a well-known benchmark seismic model featuring sophisticated structures \cite{Aminzadeh1997}.
The original model spans a physical domain of \(\qty{13.5}{km} \times \qty{13.5}{km}\times \qty{4.2}{km}\) and its velocity profile is discretized on a grid of \(676\times 676 \times 210\) points, with velocities ranging from \(\qty{1.500}{km.s^{-1}}\) to \(\qty{4.482}{km.s^{-1}}\).
As noted earlier, we scale both the spatial and temporal domains such that \(h=1/675\) and \(c_{\mathup{min}}=1\).
The scaled model is illustrated in \cref{fig:seg-eage}, where the PML layer is depicted with a thickness of \(10\) points.

\begin{figure}[!ht]
  \centering
  \begin{subfigure}[b]{0.32\textwidth}
    \centering
    \includegraphics[width=\textwidth]{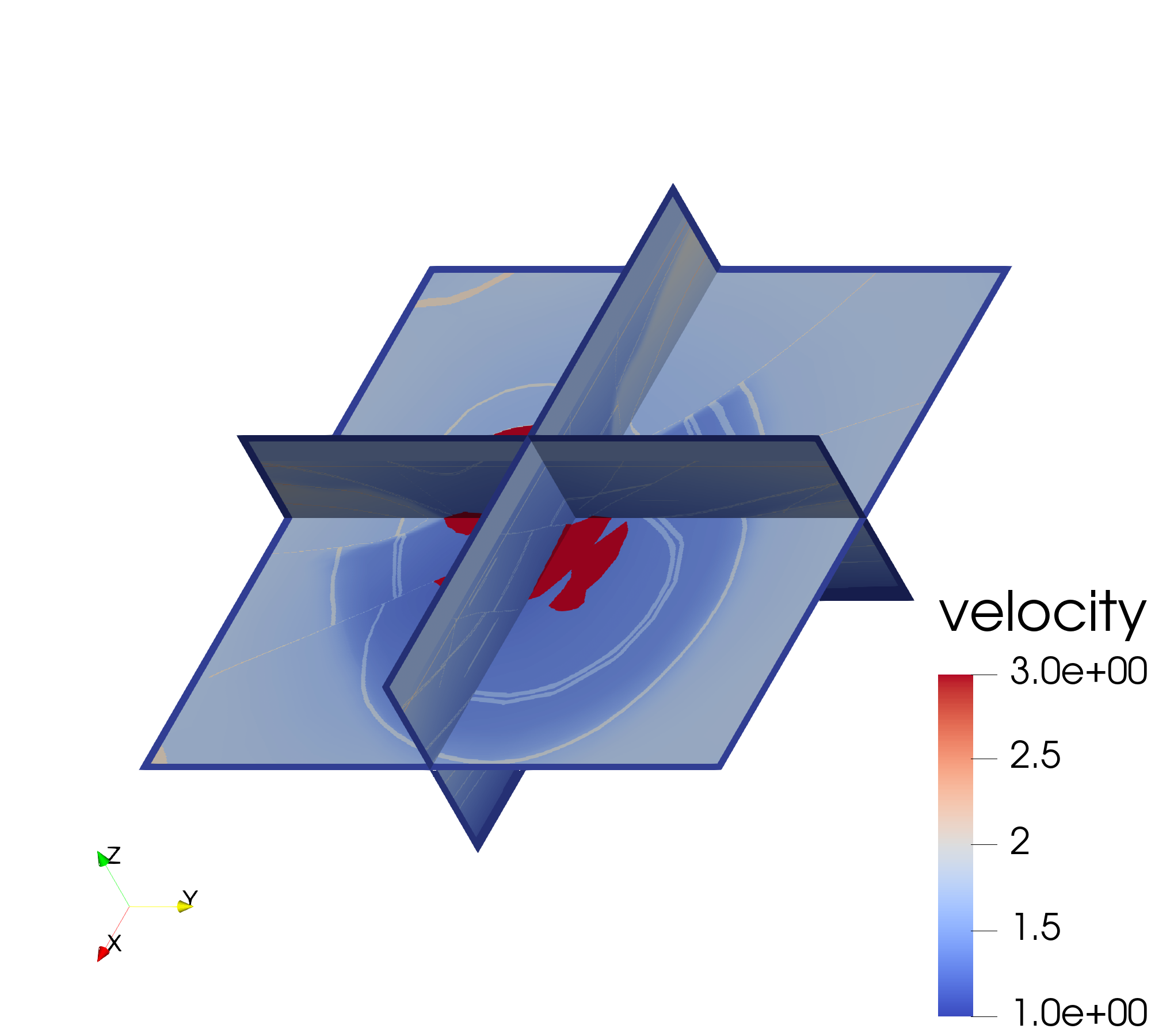}
    \caption{}
  \end{subfigure}
  \begin{subfigure}[b]{0.32\textwidth}
    \includegraphics[width=\textwidth]{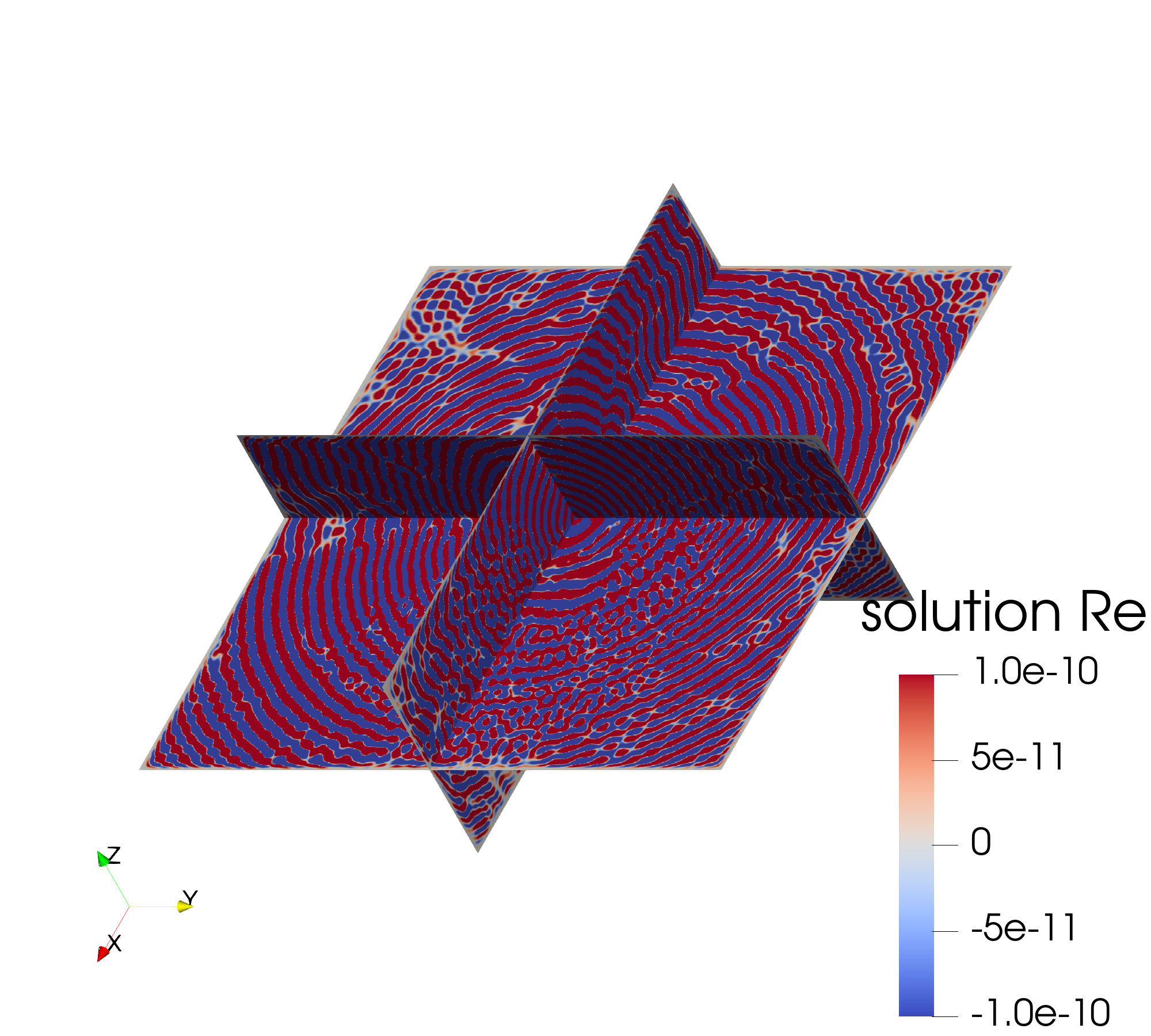}
    \caption{}
  \end{subfigure}
  \begin{subfigure}[b]{0.32\textwidth}
    \includegraphics[width=\textwidth]{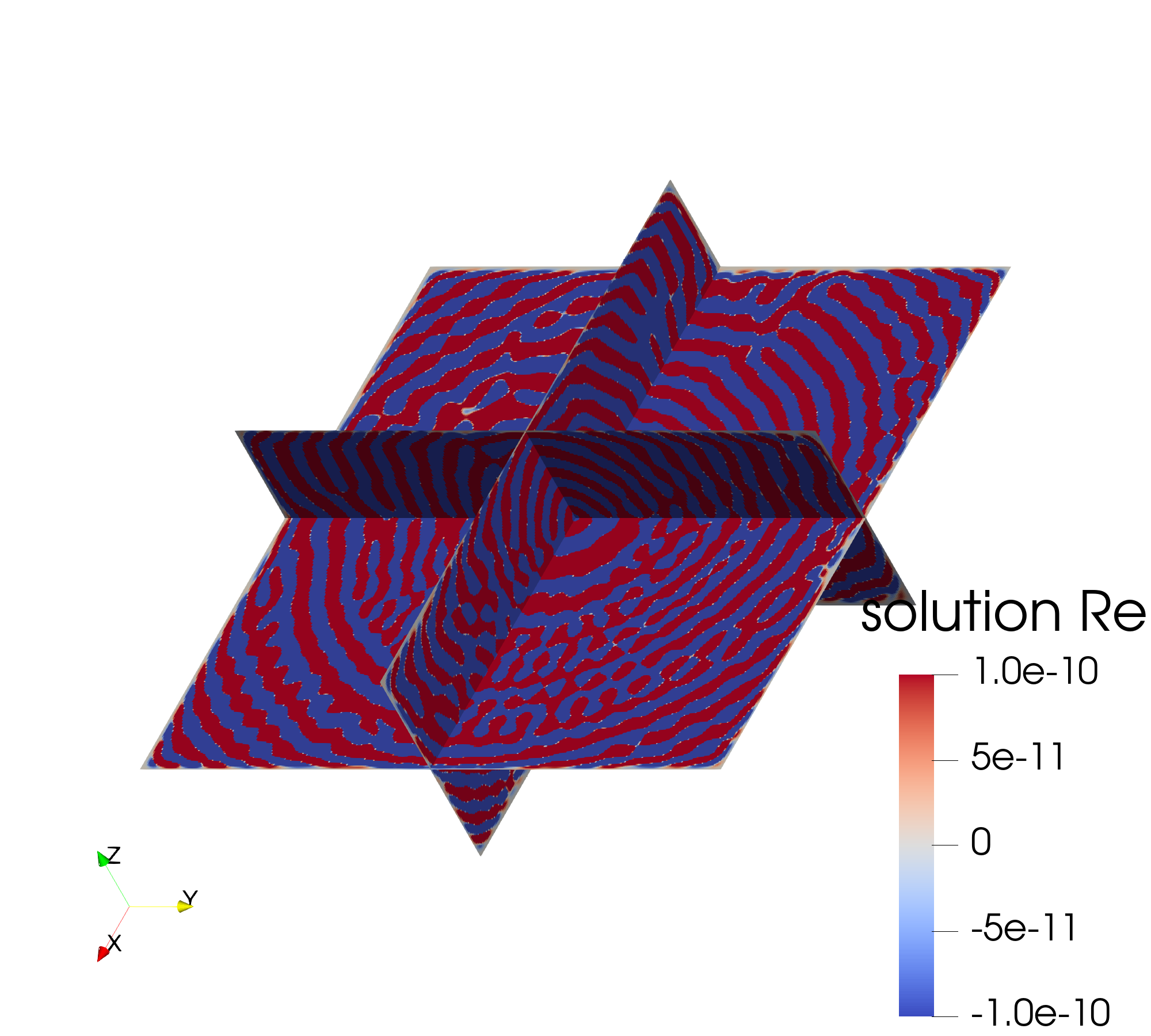}
    \caption{}
  \end{subfigure}
  \caption{(a) The scaled velocity field of the SEG/EAGE salt model with the PML layer enclosed.
    The real part of the solution for this model at frequencies \(\mathtt{freq}=67.5\) (b) and \(\mathtt{freq}=34\) (c).}\label{fig:seg-eage}
\end{figure}

To satisfy the requirement that $\mathtt{ppw}\geq 10$, the maximal frequency is set to \(\mathtt{freq}=67.5\).
For comparison, we also consider a lower frequency of \(\mathtt{freq}=34\) by coarsening the mesh while maintaining \(\mathtt{ppw}=10\).
The source term is defined as a delta function located at the center of the inner domain, and the real parts of the solutions for these two frequencies are presented in \cref{fig:seg-eage} with the magnitude suitably capped for improved visualization.
The computational parameters and iteration counts are reported in \cref{tab:gmres-iter-counts-salt}.
In this study, the two key parameters \(t\) and \(s\) are set following the previous rule, i.e., \(t=0.4/\mathtt{freq}^2\) and \(s=1/\mathtt{freq}\), with the values rounded to examine the robustness near the previous settings.
As demonstrated in \cref{tab:gmres-iter-counts-salt}, the inner GMRES iterations scale linearly with frequency, while the outer FGMRES iterations, although up to three times higher compared to the homogeneous cases, remain acceptable.

\begin{table}[!ht]
  \caption{The computational parameters and results for the SEG/EAGE salt model.}
  \label{tab:gmres-iter-counts-salt}
  \centering
  \begin{footnotesize}
    \makegapedcells
    \begin{tabular}{c c c c c c c}
      \toprule
      $\mathtt{freq}$ & DoFs                       & $t$            & $s$           & $\mathtt{fprtol}$ & outer & inner         \\
      \midrule
      $67.5$          & $695\times 695\times 229$  & $\num{9.0e-5}$ & $\num{0.015}$ & $\num{0.08}$      & $19$  & $12.7\pm 0.6$ \\
      $34$            & $357\times 357 \times 124$ & $\num{4.0e-5}$ & $\num{0.030}$ & $\num{0.08}$      & $26$  & $6.0\pm 0.1$  \\
      \bottomrule
    \end{tabular}
  \end{footnotesize}
\end{table}

\subsubsection{SEG/EAGE overthrust model}
SEG/EAGE overthrust model is another well-known benchmark seismic model \cite{Aminzadeh1997}.
The grids of this model contain \(801\times801\times187\) points, with a uniform grid spacing of \(\qty{0.025}{km}\).
The minimal and maximal velocities are \(\qty{2.178}{km.s^{-1}}\) and \(\qty{6.000}{km.s^{-1}}\), respectively.
We again scale both the spatial and temporal domains such that \(h=1/800\) and \(c_{\mathup{min}}=1\), and the scaled model is shown in \cref{fig:overthrust}.

\begin{figure}[!ht]
  \centering
  \begin{subfigure}[b]{0.32\textwidth}
    \centering
    \includegraphics[width=\textwidth]{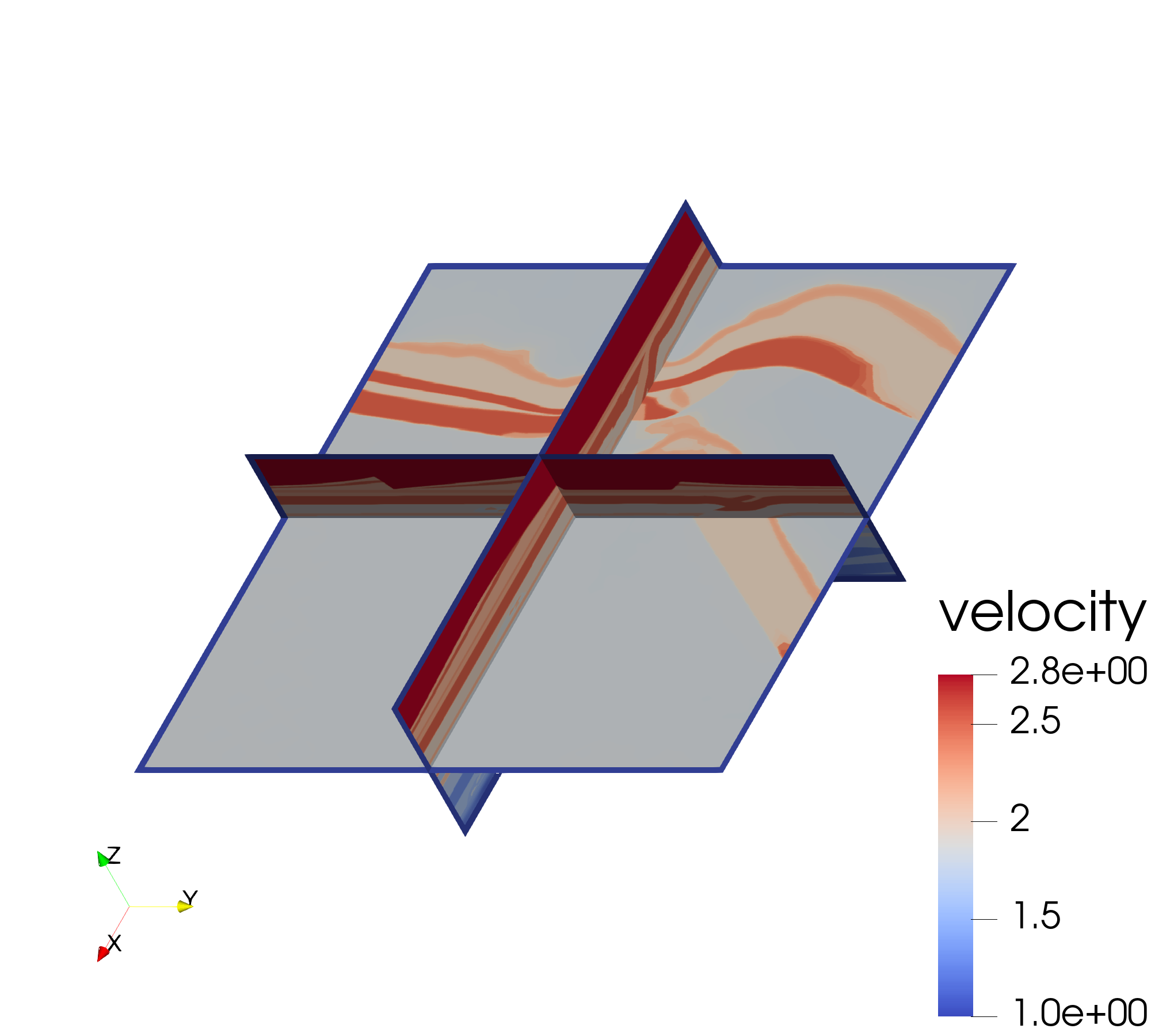}
    \caption{}
  \end{subfigure}
  \begin{subfigure}[b]{0.32\textwidth}
    \includegraphics[width=\textwidth]{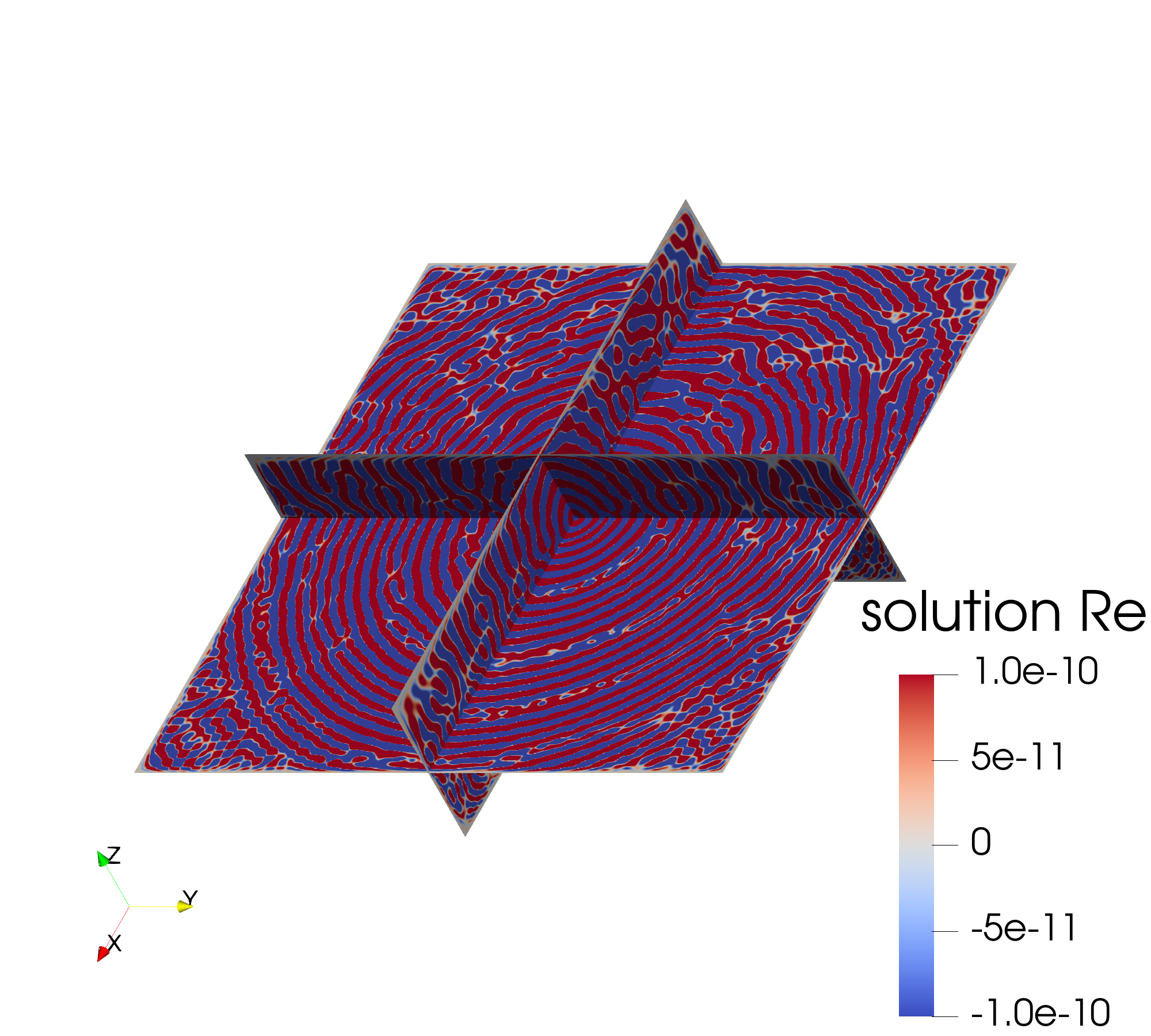}
    \caption{}
  \end{subfigure}
  \begin{subfigure}[b]{0.32\textwidth}
    \includegraphics[width=\textwidth]{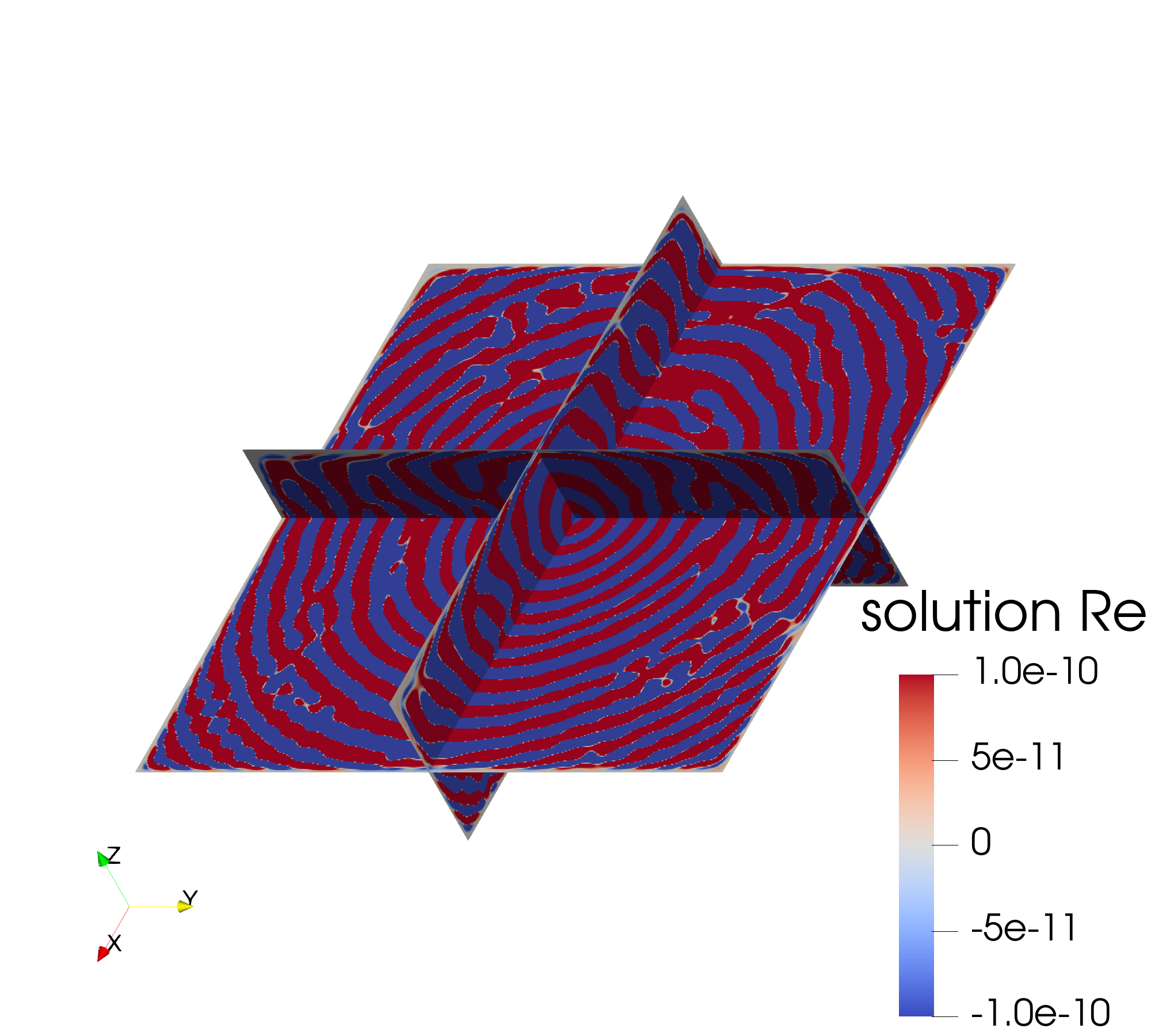}
    \caption{}
  \end{subfigure}
  \caption{(a) The scaled velocity field of the SEG/EAGE overthrust model with the PML layer enclosed.
    The real part of the solution for the SEG/EAGE overthrust model at frequencies \(\mathtt{freq}=80\) (b) and \(\mathtt{freq}=40\) (c).}\label{fig:overthrust}
\end{figure}

We again selected two frequencies, \(\mathtt{freq}=80\) and \(40\), to evaluate the performance of the proposed method.
The source term is given by a delta function located at the center of the inner domain.
The parameters \(s\) and \(t\) are chosen according to the established rule, and the real parts of the solutions at these two frequencies are shown in \cref{fig:overthrust}.
The corresponding iteration counts and other details are summarized in \cref{tab:gmres-iter-counts-overthrust}. As indicated in \cref{tab:gmres-iter-counts-overthrust}, the outer FGMRES iterations remain stable across the two frequencies, closely resembling the behavior observed in the homogeneous case (cf. \cref{tab:gmres-iter-counts-shift-3d}).
However, the inner GMRES iterations increase by approximately a factor of \(2\) for \(\mathtt{freq}=40\) and by a factor of \(3\) for \(\mathtt{freq}=80\) relative to the homogeneous cases.
These findings suggest that the performance of the proposed method is affected by the inhomogeneity of the model, which is natural given the increased complexity of wave propagation in such media and the method roots in the physical interpretation of the Helmholtz equation.

\begin{table}[!ht]
  \caption{The computational parameters and results for the SEG/EAGE overthrust model.}
  \label{tab:gmres-iter-counts-overthrust}
  \centering
  \begin{footnotesize}
    \makegapedcells
    \begin{tabular}{c c c c c c c}
      \toprule
      $\mathtt{freq}$ & DoFs                    & $t$             & $s$            & $\mathtt{fprtol}$ & outer & inner         \\
      \midrule
      $80$            & $820\times820\times206$ & $\num{6.25e-5}$ & $\num{0.0125}$ & $\num{0.08}$      & $11$  & $24.8\pm 1.7$ \\
      $40$            & $420\times420\times113$ & $\num{2.50e-4}$ & $\num{0.025}$  & $\num{0.08}$      & $12$  & $8.3\pm 0.6$  \\
      \bottomrule
    \end{tabular}
  \end{footnotesize}
\end{table}

\subsubsection{Marmousi-II model}
The Marmousi-II model is an updated version of the original Marmousi model, featuring an enlarged physical domain and increased velocity contrast \cite{Martin2006}; see subfigure (a) of \cref{fig:marmousi-ii-solution} for the P-wave velocity profile.
Notably, a water layer is incorporated atop the original model, making it more physically appropriate to enforce a sound-hard boundary condition at the water surface.
On the remaining boundaries, a first-order absorbing boundary condition is imposed \cite{Engquist1979}, with the FD discretization for these conditions implemented via the ghost point technique \cite{Trottenberg2007}.
According to \cite{Martin2006}, the source is placed near the top surface (\(z=\qty{10}{m}\)), and we adhere to this setting with its position centered in the \(x\)-direction.

\begin{figure}[!ht]
  \centering
  \begin{subfigure}[b]{\textwidth}
    \centering
    \includegraphics[width=0.49\textwidth]{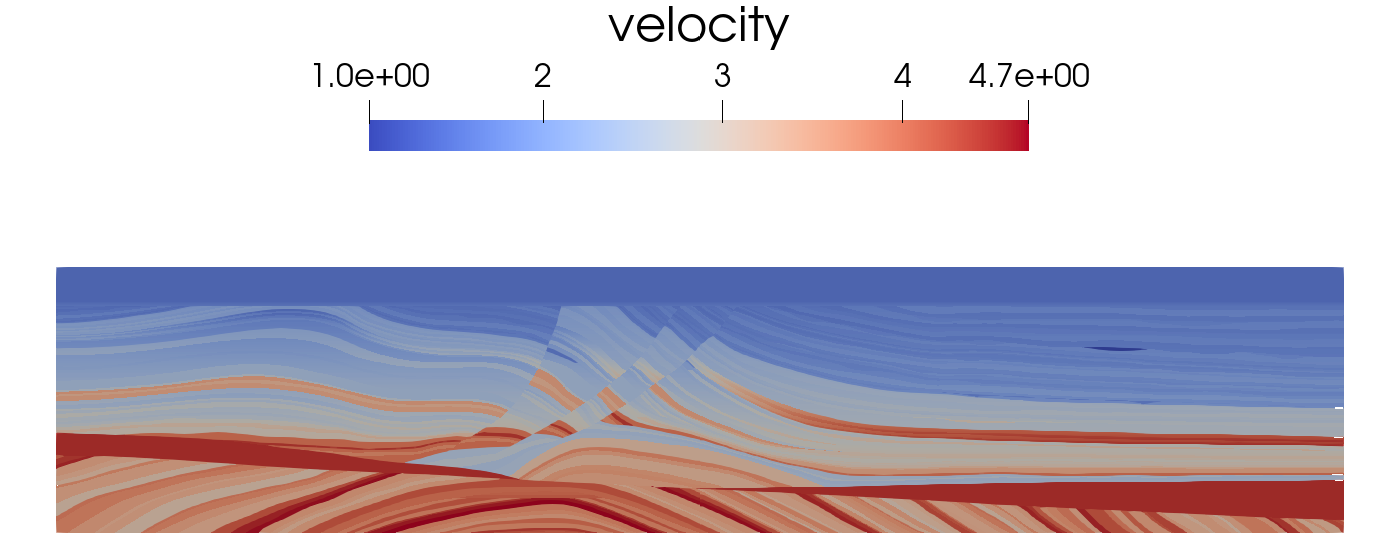}
    \caption{}
  \end{subfigure}
  \begin{subfigure}[b]{0.49\textwidth}
    \includegraphics[width=\textwidth]{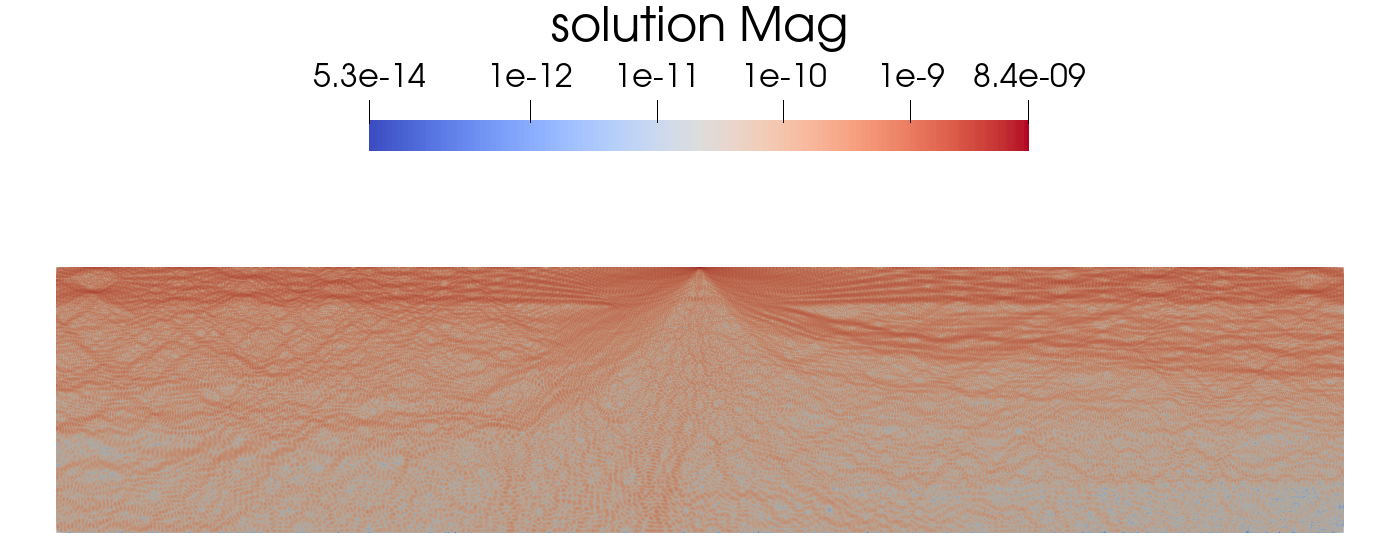}
    \caption{}
  \end{subfigure}
  \begin{subfigure}[b]{0.49\textwidth}
    \includegraphics[width=\textwidth]{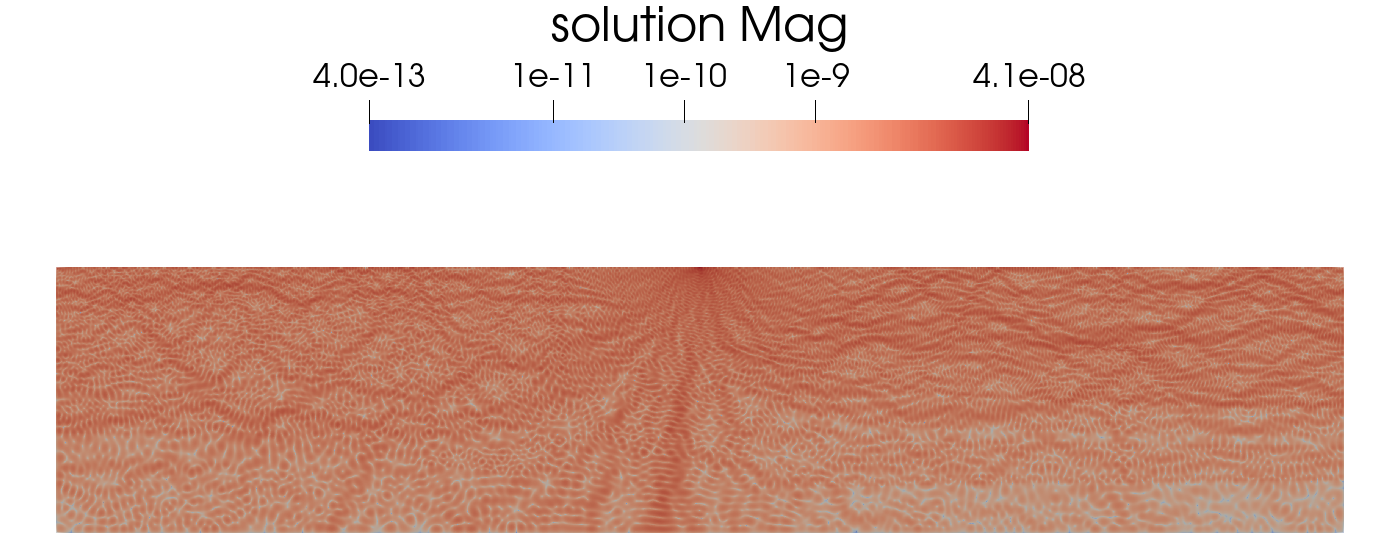}
    \caption{}
  \end{subfigure}
  \caption{The P-wave velocity field of the Marmousi-II model.
    The real part of the solution for the Marmousi-II model at frequencies \(\mathtt{freq}=680\) (b) and \(\mathtt{freq}=340\) (c).}\label{fig:marmousi-ii-solution}
\end{figure}

We also consider two frequencies \(\mathtt{freq}=680\) and \(340\), corresponding to the scaled model with \(h=1/6800\) and \(1/3400\), respectively.
In \cref{fig:marmousi-ii-solution}, the magnitude of the two solutions is visualized on a logarithmic scale, clearly revealing the complex wave structures.
The specific computational parameters and results are summarized in \cref{tab:marmousi-ii}.
Our method can be applied to this type of mixed boundary condition without any modifications.
However, compared to the 2D homogeneous models (cf.\ \cref{tab:gmres-iter-counts-shift}), we observe a significant increase in the inner GMRES iterations while the outer FGMRES iteration counts remain similar.
It can be verified that all eigenvalues of the matrix derived from this boundary condition have non-negative imaginary parts; thus, by applying a complex shift, the spectral gap to the real axis is enlarged.
As explained earlier, spectral information alone does not fully determine the convergence of the GMRES method, and a large condition number of the eigenvector matrix may deteriorate the convergence speed.
Therefore, we postulate that the eigenvectors corresponding to such mixed boundary conditions are more complex than those from PML methods, leading to a larger number of inner GMRES iterations.

\begin{table}[!ht]
  \caption{The computational parameters and results for the Marmousi-II model.}
  \label{tab:marmousi-ii}
  \centering
  \begin{footnotesize}
    \makegapedcells
    \begin{tabular}{c c c c c c c}
      \toprule
      $\mathtt{freq}$ & DoFs             & $t$            & $s$            & $\mathtt{fprtol}$ & outer & inner           \\
      \midrule
      $680$           & $6800\times1400$ & $\num{1.0e-6}$ & $\num{1.5e-3}$ & $\num{0.08}$      & $12$  & $269.9\pm 83.4$ \\
      $340$           & $3400\times700$  & $\num{4.0e-6}$ & $\num{3.0e-3}$ & $\num{0.08}$      & $12$  & $152.0\pm 29.4$ \\
      \bottomrule
    \end{tabular}
  \end{footnotesize}
\end{table}

\section{Conclusion}
\label{sec:conclusion}
As computing power rapidly evolves, harnessing the potential of modern hardware has become a pressing challenge.
For instance, GPU architectures feature numerous small computing units that can execute massive numbers of threads simultaneously.
Heavy sequential routines within an algorithm can significantly hinder overall performance on such architectures.
Additionally, efficient implementations often require a deep understanding of the underlying hardware architecture, which is not always feasible.
Therefore, it is crucial to develop algorithms that combine both parallelism and portability.

In this paper, we have introduced a new preconditioner for the Helmholtz equation, named MatExPre, which is designed to be both efficient and straightforward to implement on modern hardware architectures.
Reminiscent of several time-domain methods, such as WaveHoltz, the proposed preconditioner is derived from a time-domain Schr\"{o}dinger-like equation rather than the traditional wave equation.
We have further demonstrated that extending the time-domain approach leads to a modular algebraic solver incorporating matrix exponential integral actions.
The algorithm is composed of nested Krylov subspace solvers, in which the inner GMRES solver accelerates the fixed-point system while the outer FGMRES solver solves the preconditioned system.
We have also introduced a technique of applying a complex shift to the original Helmholtz operator within the preconditioner to further accelerate the convergence of the inner GMRES solver.
Our observations indicate that, with appropriate parameter selections, the proposed preconditioner can solve high-frequency Helmholtz problems with the number of SpMV operations growing linearly with frequency.
Several aspects of the proposed method remain open to improvement.
For example, given its flexibility with different types of discretizations, it would be meaningful to extend the method to alternative schemes that offer advantages for high-frequency simulations.


\bibliographystyle{siamplain}
\bibliography{refs.bib}

\end{document}